\documentclass[preprint,12pt]{elsarticle}






\usepackage[utf8]{inputenc} 
\usepackage[T1]{fontenc}    
\usepackage{hyperref}       
\usepackage{url}            
\usepackage{booktabs}       
\usepackage{amsfonts}       
\usepackage{nicefrac}       
\usepackage{microtype}      
\usepackage{xcolor}         
\usepackage{amsmath}
\usepackage{amsthm}
\usepackage{amssymb}
\usepackage{algorithm2e}
\usepackage{float}
\usepackage{multirow}
\usepackage[inline]{enumitem}

\newtheorem{theorem}{Theorem}
\newtheorem{lemma}{Lemma}

\newtheorem{cor}{Corollary}

\makeatletter
\newcommand*{\inlineequation}[2][]{%
  \begingroup
    \refstepcounter{equation}%
    \ifx\\#1\\
    \else
      \label{#1}
    \fi
    \relpenalty=10000
    \binoppenalty=10000
    \ensuremath{
      #2
    }
    ~\@eqnnum
  \endgroup
}
\makeatother

\usepackage{xcolor}
\usepackage{amsmath}
\usepackage{amssymb}
\interdisplaylinepenalty=2500
\usepackage{mathtools}

\usepackage{graphicx}
\usepackage{subfigure} 
\usepackage{caption}

\newcommand*\diff{\mathop{}\!\mathrm{d}}

\begin{document}

\begin{frontmatter}

\title{Decentralised convex optimisation with probability-proportional-to-size quantization}

\author{D.A.~Pasechnyuk}
\affiliation{
organization={Mohamed bin Zayed University of Artificial Intelligence},
country={UAE}}
\affiliation{
organization={Ivannikov Institute for System Programming of the Russian Academy of Sciences},
country={Russia}}

\author{P. Dvurechensky}
\affiliation{
organization={Weierstrass Institute for Applied Analysis and Stochastics},
country={Germany}}

\author{C.A.~Uribe}
\affiliation{
organization={Rice University},
country={USA}}

\author{A.V.~Gasnikov}
\affiliation{
organization={Moscow Institute of Physics and Technology},
country={Russia}}


\begin{abstract}
Communication is one of the bottlenecks of distributed optimisation and learning. To overcome this bottleneck, we propose a novel quantization method that transforms a vector into a sample of components' indices drawn from a categorical distribution with probabilities proportional to values at those components. Then, we propose a primal and a primal-dual accelerated stochastic gradient methods that use our proposed quantization, and derive their convergence rates in terms of probabilities of large deviations. We focus on affine-constrained convex optimisation and its application to decentralised distributed optimisation problems. To illustrate the work of our algorithm, we apply it to the decentralised computation of semi-discrete entropy regularized Wasserstein barycenters.
\end{abstract}

\begin{keyword}
accelerated gradient method \sep decentralised optimisation \sep quantization \sep primal-dual algorithm \sep large deviation \sep Wasserstein barycenter
\end{keyword}

\end{frontmatter}

\section{Introduction}
Recent machine learning applications require to process of a large amount of information in order to train a sufficiently good model. Moreover, such models have a huge number of parameters and their training requires solving large-scale optimization problems. A remedy to this scaling issue is the use of distributed/decentralised optimization and machine learning, i.e., the approach that distributes the computational load between a network of computational devices or agents. This approach also allows respecting privacy constraints and covers the setting when the data is not only stored distributedly but is also produced distributedly, therefore infeasible to accumulate in one machine. Despite the popularity  and effectiveness of this approach, one of its main bottlenecks is sending information between the devices, as decentralisation implies some form of communication between the devices: sending large amounts of information leads to delays, interruptions, and losses. In this paper, we focus on the task of reducing the amount of information sent between agents without sacrificing the quality of the approximate solution of the optimization problem.

\textbf{Contributions.} In this paper, we propose a quantization method for distributed stochastic (or deterministic) optimisation that exploits the magnitude of the entries of the vector to be communicated via random sampling; we name this method Probability-Proportional-to-Size (PPS). Our procedure splits the components of this vector into two parts: non-negative and negative components. Each part is then mapped to a vector from probability simplex, which is used to define a probability distribution over the components of the vector. Then, we use this probability distribution to sample a zero-one vector as a quantized version of the initial vector. This results in an unbiased stochastic estimator and communication-efficient bit vector for distributed optimisation. 
In terms of the number of communication bits, our operator matches the best existing nearly optimal compression operators.
At the same time, when the initial vector belongs to a simplex, our quantization operator leads to a second-order moment smaller than other compression operators available in the literature. This particular case is important since a number of large-scale learning problems, such as logistic regression and regularized optimal transport objectives, have this structural property, namely, the gradient of the objective always belongs to a simplex. 

Leveraging our quantization operator, we propose an accelerated quantized stochastic gradient method for problems with linear constraints. Moreover, we propose an accelerated distributed algorithm with quantization for distributed stochastic optimisation problems. To the best of our knowledge, this is the first accelerated distributed algorithm that combines stochasticity, quantization, and primal-dual properties. The latter means that the main iterates are made in the dual space, but the convergence in the primal variable also has an accelerated rate. In contrast to the existing literature, we provide a convergence rate analysis in terms of the probability of large deviation for the objective residual and consensus constraints. Note that existing analyses are based on convergence in expectation, and a particular run of an algorithm may have large objective residuals and constraint infeasibility. As an intermediate result of independent interest, we also propose an accelerated algorithm with quantization for unconstrained stochastic optimization. 

\textbf{Motivating application.} We exploit the structure of the proposed quantization method to develop a new algorithm for the decentralised computation of approximate semi-discrete Wasserstein barycenters. Optimal transport (OT) has become essential in modern machine learning (ML) due to its ability to consider data geometry in computations. Its applications span image retrieval \citep{rubner2000earth}, image classification \citep{cuturi2013sinkhorn}, and Generative Adversarial Networks \citep{arjovsky2017wasserstein}. One significant use of Wasserstein distances is defining the Fr\'{e}chet mean of distributions, known as the Wasserstein barycentre (WB) \citep{agueh2011barycenters,Boissard2015,ebert2017construction,Panaretos2020FrechetMeans}. WB can ``average'' images interpreted as discrete probability distributions, enabling applications like image morphing, atmospheric data averaging, graph learning, and fairness in ML \cite{barre2020averaging,chzhen2020fair,simon2020barycenters,simou2020node2coords}. However, the increasing scale of ML applications also raises the computational cost of using WB. For instance, calculating the WB for two images with one million pixels involves solving a large-scale optimization problem with approximately $10^{12}$ variables. Achieving high accuracy in WB approximations necessitates processing many samples, leading to polynomial dependencies on distribution support size, number of distributions, and desired accuracy~\cite{borgwardt2019computational}. Researchers have proposed various methods, including heuristics \cite{bouchet2020primal}, strongly-polynomial 2-approximation \cite{borgwardt2020lp}, fast computation techniques \cite{guminov2019acceleratedAM,guo2020fast}, saddle-point methods \cite{tiapkin2020stochastic}, and proximal methods \cite{stonyakin2019gradient, xie2020fast}. Additionally, multi-marginal optimal transport approaches are being explored \cite{2019arXiv191000152L, tupitsa2020multimarginal}.


\textbf{Related works.} 
\textit{Decentralised optimisation.} We limit ourselves to a review of works where the synchronous primal-dual algorithms were proposed and analysed under the smoothness assumption without assuming strong convexity. Primal-dual nature of the algorithm appears to be convenient for the main application considered in this paper, which also does not satisfy strong convexity in its original formulation. At the same time, synchronicity imposes some restrictions on only the communication protocol, which remains implementable in practice. For a broader overview of alternative decentralised optimisation methods, see the following papers and references therein.

Lower bounds for oracle and communication complexity of convex decentralised optimisation problems were obtained in \cite{scaman2019optimal}. The corresponding optimal efficiency can be achieved by the methods which are designed as follows: the requirement of consensus is represented as an affine constraint, and the variable is divided into copies among the computational nodes so that an accelerated gradient method can be applied to solve the affine-constrained convex optimisation problem by updating each copy separately. Based on the selected accelerated algorithm, different approaches have been proposed. In a deterministic case, \cite{uribe2020dual} (based on \cite{nesterov2013introductory}) and \cite{xu2020accelerated} (based on \cite{chambolle2016ergodic}), and in a stochastic case, \cite{gorbunov2019optimal} (based on \cite{gasnikov2018universal}) and \cite{lan2020communication} (based on \cite{chambolle2016ergodic}), with their corresponding large deviations probability estimates. 

An alternative stochastic algorithm which is based on \cite{devolder2011stochastic} is considered in \cite{krawtschenko2020distributed}. Its convergence analysis is quite similar to that of methods based on \cite{gasnikov2018universal}. Although its iteration takes more arithmetic operations, it is advantageous when the batch size and quantization parameters vary. It is known that variable batch size allows one to reduce sufficient iteration numbers down to the optimal one independent of stochasticity. In our analysis, we extend this to the number of samples in quantization.

\textit{Quantization.} An operation $Q_\xi$, $\xi \in \Xi$, applied to a vector $x \in S \subseteq \mathbb{R}^n$ for making the result $Q_\xi(x) \in \mathbb{R}^n$ encodable by fewer number of bits than the original vector, while introducing the smaller inexactness, is called quantization (or sparsification or dithering, which words distinguish the differences which are not important for us). This operation is helpful for lightening the communication load while solving distributed optimisation problems. If a gradient algorithm is used, the gradient is quantized before being shared with the neighbor nodes. We limit ourselves to methods of unbiased quantization, that is, those satisfying $\mathbb{E}_\xi [Q_\xi(x)] = x$, $\forall x \in S$. Inexactness introduced by a quantization is characterized by the second moment $\omega > 0$, that is $\mathbb{E}_\xi [\|Q_\xi(x) - x\|^2] \leq \omega \|x\|^2$, $\forall x \in S$.

\begin{table}[H]
    \centering
    \begin{tabular}{|l|l|c|c|}
    \hline
    Quantization & Reference & Second moment $\omega$ & Communicated bits\\
    \hline
        top $M$ & \cite{alistarh2018convergence} & $1 - M/n$ & $M (|float| + \log n)$\\
        random $M$ & \cite{li2020acceleration} & $n/M - 1$ & $M (|float| + \log n)$ \\
        $(p, s)$-dithering & \cite{horvoth2022natural} & $\alpha + \sqrt[p]{n} / 2^s$ & $|float| + n \log s$\\
        nearly optimal & \cite{albasyoni2020optimal} & $\alpha$ & $|float| + M \log n$\\\hline
        PPS & \multirow{2}{*}{This Work} & $\frac{2 (1 - \frac{1}{n})}{\mathrm{e} M}\sup_{x\in S} \frac{\|x\|_1^2}{\|x\|^2}$ & \multirow{2}{*}{$|float| + M \log n$} \\
        PPS (when $B = 1$) &  & $\frac{2 (1 - \frac{1}{n})}{\mathrm{e} M}$ & \\\hline
    \end{tabular}
    \caption{Properties of quantization operators. $n$ is dimensionality of the problem, $B$ is the maximum 1-norm of the gradient, $M = O(n)$ is number of samples. $\alpha \ll 1$ denotes the bounded second moment of the compression operator itself. Constant factors and terms in the second moment and communicated bits columns are omitted.}
    \label{tab:quant}
\end{table}
Many quantization methods are parameterised with a number of samples $M$, an increase of which reduces $\omega$ but leads to an increase in the number of bits required to encode its result. Table~\ref{tab:quant} summarizes the properties of widely-used quantization methods and compares them with the PPS quantization we propose, to be introduced in Section~\ref{sec:pps}. One can see that the dependence of the number of bits for PPS quantization is near-optimal. On the opposite, the second moment is generally as good as that of the random $M$ method, taking into account that $\sup_{x \in \mathbb{R}^n} \frac{\|x\|_1^2}{\|x\|^2} = n$. But when $S$ is such that $\sup_{x \in S} \|x\|_1 \leq B$, which is the case if $S = \Delta_n$, the second moment of PPS quantization with one sample becomes constant, and this disadvantage in comparison with other methods becomes insignificant.



\textbf{Limitations.} In this paper, we consider only the case when both domain and range spaces of an optimised function are Euclidean with the standard norms for the sake of simplicity and because of the practical significance of this case. In the optimisation method, we assume the prox-function is Euclidean because it is a preferable choice in our approach to the Wasserstein barycentre problem. Nevertheless, most of the results in this paper can be generalised to arbitrary norms and prox-function, as in sources on which we base our statements.  

\textbf{Notations.}
\begin{enumerate*}
    \item Bold symbols (like $\boldsymbol{\xi}$) denote vectors or finite sequences of values when there is a need to manipulate with their individual components ($i$th component of $\boldsymbol{\xi}$ is denoted by $\xi_i$), or sets of vector or operators acting on these sets (in Section~\ref{sec:distr}), therefore $\boldsymbol{x}^i$ means $i$th vector in the set. $\|x\|$ denotes Euclidean norm of vector $x$, $\|x\|_1$ denotes its Manhattan norm, $\|A\|_2$ denotes spectral norm of matrix $A$. All vectors are columns, that is $\mathbb{R}^n = \mathbb{R}^{n \times 1}$.
    \item Related to randomness: $\Xi$ can be treated as sample space as well as the range of every random variable. $P\left(A \gtreqless B\right)$ denotes the probability of the event that inequality between $A$, which depends on random variables, and $B$, which is fixed, holds. $\mathbb{E}_\xi[A]$ denotes expectation of $A$ which depends on $\xi$, with respect to the randomness which generates $\xi$. When there is a vector $\boldsymbol{\xi} \in \Xi^n$, components $\xi_i$ are independent. The random variable $\xi$ is drawn from categorical distribution with probabilities $P(k)$, $k \in \{1, ..., n\}$, if $\sum_{k=1}^n P(k) = 1$ and $\xi = k$ with probability $P(k)$ for every $k \in \{1, ..., n\}$. $\Delta_n = \{\boldsymbol{x}\geq 0\;\|\;\sum_{i=1}^n x_i = 1\}$ denotes a probability simplex.
    \item Related to asymptotic: $poly(f(A))$ denotes $1 + f(A) + (f(A))^2 + \dotsi + (f(A))^i$ for some fixed $i \in \mathbb{N}$. $B = O(f(A))$ means $\exists C, a \in \mathbb{R}\;\forall A \geq a\;(B \leq C \cdot f(A))$ if $A$ stands for large values like iterations number, or $\exists C, a \in \mathbb{R}\;\forall A \leq a\; (B \leq C \cdot f(A))$ otherwise, like if $A$ is an allowed probability $\delta$ of fail or target accuracy $\varepsilon$. $B = \widetilde{O}(f(A))$ means the same, but with $B \leq C \cdot f(A) \cdot O\big(poly\big(\log{\frac{1}{\varepsilon}}, \sqrt{\log \frac{1}{\delta}}\big)\big)$.
    \item Related to graphs: $G = \{1, ..., m\}$, where $m \in \mathbb{N}$, denotes the set of nodes in graph. $\mathsf{E} \subseteq G \times G$ denotes the set of edges in graph. For $i, j \in G$, $i\mathsf{E}j$ means that $(i, j) \in \mathsf{E}$.
\end{enumerate*} 

\section{PPS quantization} \label{sec:pps}
In this section, we introduce the oracle, which provides a sparse gradient approximation given the point, called the Probability-Proportional-to-Size (PPS) quantized gradient oracle. Let there be a differentiable function $f: \mathbb{R}^n \to \mathbb{R}$. In problems of our interest, some stochastic gradient oracle $g(x, \xi)$ is available, let it satisfy the following conditions $\forall x \in \mathbb{R}^n$:
\begin{enumerate}
    \item $\mathbb{E}_\xi[g(x, \xi)] = \nabla f(x)$,
    \item $\mathbb{E}_\xi[\exp(\|g(x, \xi) - \nabla f(x)\|^2 / \sigma^2)] \leq \mathrm{e}$,
    \item $\forall \xi \in \Xi, \|g(x, \xi)\|_1 \leq B$.
\end{enumerate}
We allow our methods to use mini-batching, that is, the oracle $G: (x, \boldsymbol{\xi}) \mapsto \frac{1}{r} \sum_{i=1}^r g(x, \xi_r)$ is available. Let $G(x, \boldsymbol{\xi}) = \boldsymbol{G}$, then $\boldsymbol{G} = [\boldsymbol{G}]_+ - [-\boldsymbol{G}]_+$, where $i$th component of $[\boldsymbol{v}]_+$ is defined by $\max\{0, v_i\}$. Note that to any vector $[\boldsymbol{G}]_+$ with non-negative components corresponds a categorical distribution with probabilities $P_{\boldsymbol{G}}(k) = [G_k]_+ / \|[\boldsymbol{G}]_+\|_1$, analogously the $P_{-\boldsymbol{G}}$ is defined. Let $\boldsymbol{k}, \boldsymbol{l} \in \{1, ..., n\}^M$ be samples drawn from $P_{\boldsymbol{G}}$ and $P_{-\boldsymbol{G}}$, correspondingly. PPS quantized gradient oracle is defined as follows:
\begin{equation} \label{eq:pps}
    PPS: (x, \boldsymbol{\xi}, \boldsymbol{k}, \boldsymbol{l}) \mapsto \frac{\|[\boldsymbol{G}]_+\|_1}{M} \sum_{i=1}^M \boldsymbol{e}_{k_i} - \frac{\|[-\boldsymbol{G}]_+\|_1}{M} \sum_{i=1}^M \boldsymbol{e}_{l_i},
\end{equation}
where $\boldsymbol{e}_i$ is $i$th standard basis' vector. If the reader finds it wrong to consider $\boldsymbol{k}$ and $\boldsymbol{l}$ as arguments of PPS, as they depend on $\boldsymbol{G}$, consider their components as uniformly distributed on $[0, 1]$ in arguments, so that they are transformed according to $P_{\boldsymbol{G}}$ and $P_{-\boldsymbol{G}}$ in course of PPS' evaluation. 

Note that $PPS(x, \boldsymbol{\xi}, \boldsymbol{k}, \boldsymbol{l})$ for predefined $M$ is fully determined by $2$ real numbers $\|[\boldsymbol{G}]_+\|_1$ and $\|[-\boldsymbol{G}]_+\|_1$, and $2M$ integer numbers $k_i$ and $l_i$ with values in between $1$ and $n$. Imagine a distributed optimisation setup, where one server calls another server's oracle and awaits for $PPS(x, \boldsymbol{\xi}, \boldsymbol{k}, \boldsymbol{l})$: this communication takes $2 |float| + 2 M \log_2 n$ bits overall.

\begin{lemma} \label{thm:var}
    Let $\sigma_{r,M}^2 = 50 \left(\frac{2 (1 - 1/n) B^2}{\mathrm{e} M} + \frac{\sigma^2 }{r}\right)$. Then, it holds that
    \begin{equation*}
        \forall x \in \mathbb{R}^n, \mathbb{E}_{\boldsymbol{\xi},\boldsymbol{k}, \boldsymbol{l}}[\exp(\|PPS(x, \boldsymbol{\xi}, \boldsymbol{k}, \boldsymbol{l}) - \nabla f(x)\|^2 / \sigma_{r,M}^2)] \leq \mathrm{e}.
    \end{equation*}
\end{lemma}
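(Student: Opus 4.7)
The plan is to decompose the total error into the mini-batch stochastic error and the quantization error, establish a sub-Gaussian (exponential-of-square) moment bound for each piece, and combine them. Concretely, write
\[
PPS(x, \boldsymbol{\xi}, \boldsymbol{k}, \boldsymbol{l}) - \nabla f(x) = \bigl(PPS(x, \boldsymbol{\xi}, \boldsymbol{k}, \boldsymbol{l}) - G(x, \boldsymbol{\xi})\bigr) + \bigl(G(x, \boldsymbol{\xi}) - \nabla f(x)\bigr),
\]
apply $\|a+b\|^2 \leq 2\|a\|^2 + 2\|b\|^2$, and use Cauchy--Schwarz on the exponential moment (conditioning first on $\boldsymbol{\xi}$ to separate the quantization randomness $(\boldsymbol{k},\boldsymbol{l})$ from the sampling randomness $\boldsymbol{\xi}$). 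This reduces the statement to two independent sub-Gaussian bounds, and the final factor $50$ will absorb the constants from these two steps.

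For the mini-batch term I would note that $G - \nabla f(x) = \tfrac{1}{r}\sum_{i=1}^r (g(x,\xi_i)-\nabla f(x))$ is an average of $r$ independent, mean-zero vectors each of which is sub-Gaussian with parameter $\sigma$ by Assumption~2. A standard light-tail concentration lemma for the norm of an average of independent sub-Gaussian vectors (as used, e.g., in Juditsky--Nemirovski-style analyses and in the sources cited in the Related Works) then yields $\mathbb{E}_{\boldsymbol{\xi}}\bigl[\exp\bigl(\|G-\nabla f(x)\|^2/(C_1 \sigma^2/r)\bigr)\bigr] \leq \mathrm{e}$ for an absolute constant $C_1$.

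For the quantization term I would work conditionally on $\boldsymbol{\xi}$, i.e.\ on $\boldsymbol{G}$. The centered per-sample vectors $\|[\boldsymbol{G}]_+\|_1 \boldsymbol{e}_{k_i} - [\boldsymbol{G}]_+$ and $\|[-\boldsymbol{G}]_+\|_1 \boldsymbol{e}_{l_i} - [-\boldsymbol{G}]_+$ are mean-zero with norms bounded by $\|[\boldsymbol{G}]_+\|_1$ and $\|[-\boldsymbol{G}]_+\|_1$, respectively. A direct computation gives
\[
\mathbb{E}_{k}\bigl[\|\|[\boldsymbol{G}]_+\|_1 \boldsymbol{e}_{k} - [\boldsymbol{G}]_+\|^2\bigr] = \|[\boldsymbol{G}]_+\|_1^2 - \|[\boldsymbol{G}]_+\|^2 \leq \|[\boldsymbol{G}]_+\|_1^2\bigl(1-\tfrac{1}{n}\bigr),
\]
since $[\boldsymbol{G}]_+/\|[\boldsymbol{G}]_+\|_1 \in \Delta_n$ has $\ell_2$ norm at least $1/\sqrt{n}$. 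Averaging $2M$ such independent bounded summands and using $\|[\boldsymbol{G}]_+\|_1^2 + \|[-\boldsymbol{G}]_+\|_1^2 \leq \|\boldsymbol{G}\|_1^2 \leq B^2$ (from Assumption~3) gives a conditional second-moment bound of order $B^2(1-1/n)/M$. A vector-valued Azuma--Hoeffding-type inequality upgrades this to the sub-Gaussian form $\mathbb{E}_{\boldsymbol{k},\boldsymbol{l}}\bigl[\exp\bigl(\|PPS - G\|^2\, \mathrm{e} M / (2 C_2 (1-1/n) B^2)\bigr)\bigr] \leq \mathrm{e}$ for an absolute constant $C_2$; the $\mathrm{e}/2$ normalisation is what produces the $2/\mathrm{e}$ factor appearing in $\sigma_{r,M}^2$.

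Combining the two bounds through the Cauchy--Schwarz step and grouping all absolute constants into the overall prefactor $50$ yields the claim. The main obstacle is the vector sub-Gaussian upgrade for the quantization error: passing from the pointwise norm bound on each of the $2M$ summands to the exponential-of-square form on the whole sum requires either a direct reduction to the one-dimensional case via $\|Y\|=\sup_{\|u\|\leq 1}\langle u,Y\rangle$ combined with an $\varepsilon$-net/chaining argument, or the invocation of a ready-made Pinelis-type Hilbert-space martingale inequality, and the resulting constants must be tracked carefully enough to match the prefactor $50$ that the downstream accelerated method will rely upon.
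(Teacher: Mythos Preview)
Your approach is essentially the paper's: the same decomposition into mini-batch error plus the two (positive/negative) quantization pieces, the same per-sample variance calculation on the simplex giving the $(1-1/n)$ factor, and the same reduction to a sub-Gaussian averaging lemma. The one place you diverge is in what you call ``the main obstacle'': you propose Pinelis-type martingale bounds or an $\varepsilon$-net/chaining argument to upgrade the quantization second-moment bound to the $\psi_2$ form. The paper avoids this entirely by observing that each centred sample $\boldsymbol{e}_k - \boldsymbol{v}$ is \emph{bounded} (its norm is at most $\sqrt{2}$), so it is automatically sub-Gaussian in the $\psi_2$ sense (Proposition~2.6.1 of Vershynin), and then the \emph{same} averaging lemma already used for the mini-batch term (Lemma~3.1.3 of Dvinskikh) handles the sum of $M$ samples---no chaining or Pinelis needed. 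Your route would also work, but the paper's is shorter and makes the constants easier to track.
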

\begin{proof}
    By Lemma 3.1.3 \cite{dvinskikh2021decentralized}, it holds that $\mathbb{E}_{\boldsymbol{\xi}}[\exp(\|G(x, \boldsymbol{\xi}) - \nabla f(x)\|^2 / \sigma_r^2)] \leq \mathrm{e}$, where $\sigma_r^2 = 50 \sigma^2 / r$. Making use of decomposition $PPS(x, \boldsymbol{\xi}, \boldsymbol{k}, \boldsymbol{l}) - \nabla f(x) =$
    \begin{align*}
    = (\boldsymbol{G} - \nabla f(x)) &+ \|[\boldsymbol{G}]_+\|_1 \textstyle\left(\frac{1}{M} \sum_{i=1}^M \boldsymbol{e}_{k_i} - \frac{[\boldsymbol{G}]_+}{\|[\boldsymbol{G}]_+\|_1}\right)\\&- \|[-\boldsymbol{G}]_+\|_1 \left(\frac{1}{M} \sum_{i=1}^M \boldsymbol{e}_{l_i}- \frac{[-\boldsymbol{G}]_+}{\|[-\boldsymbol{G}]_+\|_1}\right),
    \end{align*}
    we see that if $\mathbb{E}_{k}[\exp(\|e_{k} - \boldsymbol{v}\|^2 / \alpha)] \leq \mathrm{e}$ holds (for any $\boldsymbol{v} \in\sigma(n)$, with $k$ drawn from $P_{\boldsymbol{v}}$) then $PPS(x, \boldsymbol{\xi}, \boldsymbol{k}, \boldsymbol{l})$ is sub-Gaussian with $\sigma_{r,M}^2 = 100 B^2 \alpha / M + \sigma_r^2$ by Proposition 2.6.1 \cite{vershynin2018high} and Lemma 3.1.3 \cite{dvinskikh2021decentralized}. 

    Let us now find $\alpha$. Firstly, $\|\boldsymbol{v} - e_k\|^2 = \sum_{i \neq k} v_i^2 + (v_k - 1)^2 = (\sum_{i=1}^n v_i^2) - v_k^2 + (v_k - 1)^2 = (\sum_{i=1}^n v_i^2) - 2v_k + 1$. Taking the expectation, $\mathbb{E}_k[\|\boldsymbol{v} - e_k\|^2] = \frac{1}{n} \sum_{k=1}^n [(\sum_{i=1}^n v_i^2) - 2v_k + 1] = (\sum_{i=1}^n v_i^2) - \frac{2}{n} \sum_{k=1}^n v_k + 1 = (\sum_{i=1}^n v_i^2) - \frac{2}{n} + 1$. By Cauchy--Schwarz, $(\sum_{i=1}^n v_i \cdot 1)^2 \leq n (\sum_{i=1}^n v_i^2)$, from where $1/n \leq (\sum_{i=1}^n v_i^2)$. Therefore, $\mathbb{E}_k [\|\boldsymbol{v} - e_k\|^2] = (\sum_{i=1}^n v_i^2) - 2/n + 1 \geq 1/n - 2/n + 1 = 1 - 1/n$. Since we strive for $\mathbb{E}_k[\|\boldsymbol{v} - e_k\|^2 / \alpha] \leq \mathrm{e}$, we have $(1 - 1/n) / \alpha \leq \mathrm{e}$, from where $\alpha = (1 - 1/n) / \mathrm{e}$, which is tight because the lower bound given by Cauchy--Schwarz becomes equality when $v_i = 1/n$ for all $i = 1, ..., n$.
\end{proof}

The next section introduces the generic smooth and convex optimisation problem of interest, for which the PPS quantization is tailored.

\section{Affine constraints} \label{sec:aff}
Below, the affine-constrained convex optimisation problem is introduced and reduced to the unconstrained statement similar to \cite{dvurechenskii2018decentralize,krawtschenko2020distributed}, addressable with the help of quantization method we had introduced in Section~\ref{sec:pps}. Let there be an optimisation problem
\begin{equation} \label{eq:prim}
    f(x_*) = \min_{\substack{x \in \mathbb{R}^n\\A x = b}} f(x),
\end{equation}
where $A \in \mathbb{R}^{m\times n}$ is positive-definite, $b \in \mathbb{R}^m$, $f: \mathbb{R}^n \to \mathbb{R}$ is convex and continuously differentiable. The following problem is dual to the former one:
\begin{equation} \label{eq:dual}
    \varphi(\lambda_*) = \min_{\lambda \in \mathbb{R}^m} \left\{\varphi(\lambda) = \lambda^\top b + f^*(-A^\top \lambda) \right\},
\end{equation}
where $\|\lambda_*\| \leq R$, and $f^*$ is a Fenchel--Legendre conjugate of $f$, defined by $f^*(\mu) = \max_{x \in \mathbb{R}^n} \{\mu^\top x - f(x)\}$. $\nabla \varphi$ is assumed to be $L$-Lipschitz continuous. Problem \eqref{eq:dual} is to be solved to obtain an approximately feasible $\varepsilon$-approximate solution $x$ to \eqref{eq:prim}, which satisfies:
\begin{equation} \label{eq:target}
    P(f(x) - f(x_*) \leq \varepsilon) > 1 - \delta,\quad P\left(\|A x - b\| \leq \frac{\varepsilon}{R}\right) > 1 - \delta
\end{equation}
for any small $\varepsilon, \delta > 0$.

We assume that only the conjugate function $f^*$ is computable and that there is a function $F^*(\lambda, \xi)$ and its gradient $\nabla F^*(\lambda, \xi)$ is available, which are zeroth- and first-order oracles for $f^*$, with the following properties:
\begin{enumerate}
    \item $\mathbb{E}_\xi [F^*(\mu, \xi)] = f^*(\xi)$,
    \item $F^*$ is smooth with respect to $\mu$,
    \item $F^*(\mu, \xi) = \max_{x \in \mathbb{R}^n} \{\mu^\top x - F(x, \xi)\}$ for some convex $F(x, \xi)$.
\end{enumerate}
The latter two provide us with means to restore the primal variable from the dual one:
\begin{equation} \label{eq:varres}
     x(\mu, \xi) = \arg \max_{x \in \mathbb{R}^n} \{\mu^\top x - F(x, \xi)\} = \nabla F^*(\mu, \xi).
\end{equation}

Note that if the latter equality is used in computations, $x(\mu, \xi)$ is unbiased. Using these assumptions, the oracle $g: (\lambda, \xi) \mapsto b - A \nabla F^*(-A^\top \lambda, \xi)$ is defined, which enjoys $\mathbb{E}_\xi [g(\lambda, \xi)] = \nabla \varphi(\lambda)$ by construction, and is required additionally to satisfy
\begin{enumerate}
    \item[4.] $\mathbb{E}_\xi[\exp(\|g(\lambda, \xi) - \nabla \varphi(\lambda)\|^2 / \sigma^2)] \leq \mathrm{e}$.
\end{enumerate}
Besides, we will need a stochastic counterpart $\Phi: (\lambda, \xi) \mapsto \lambda^\top b + F^*(-A^\top \lambda, \xi)$ of the dual function \eqref{eq:dual} and that of primal variable restoring function \eqref{eq:varres}, which is \begin{equation} \label{eq:xrestore}
    x(\mu) = \arg \max_{x \in \mathbb{R}^n} \{\mu^\top x - f(x)\} = \nabla f^*(\mu).
\end{equation}

\section{Theoretical guarantees}

\RestyleAlgo{ruled}

\begin{algorithm}[H]
\caption{Primal method}\label{alg:dev1}
\KwIn{iterations number $T$, $\boldsymbol{\alpha}$, $\boldsymbol{\beta}$, $\boldsymbol{r}$, $\boldsymbol{M}$, $PPS$ quantized gradient oracle}
\KwOut{$x_T$}
 $y_0 = 0$\;
 Sample $\boldsymbol{\xi}_0 \in \Xi^{r_t}$, $\boldsymbol{k}_0, \boldsymbol{l}_0 \in \Xi^{M_t}$\;
  $G_0 = PPS(y_0, \boldsymbol{\xi}_0, \boldsymbol{k}_0, \boldsymbol{l}_0)$\;
 $x_0 = -\frac{\alpha_0}{\beta_0} G_0$\;
 $A_0 = \alpha_0$\;
 \For{$t = 0, ..., T-1$}{
  $z_t = -\frac{1}{\beta_t} \sum_{i=0}^t \alpha_i G_i$\;
  $A_{t+1} = A_t + \alpha_{t+1}$\;
  $\tau_t = \frac{\alpha_{t+1}}{A_{t+1}}$\;
  $y_{t+1} = \tau_t z_t + (1 - \tau_t) x_t$\;
  Sample $\boldsymbol{\xi}_{t+1} \in \Xi^{r_t}$, $\boldsymbol{k}_{t+1}, \boldsymbol{l}_{t+1} \in \Xi^{M_t}$\; 
$G_{t+1} = PPS(y_{t+1}, \boldsymbol{\xi}_{t+1}, \boldsymbol{k}_{t+1}, \boldsymbol{l}_{t+1})$\;
  $\hat{y}_{t+1} = z_t -\frac{\alpha_{t+1}}{\beta_t} G_{t+1}$\;
 }
\end{algorithm}

\subsection{Primal optimisation}
This section provides the analysis of the primal algorithm for a problem without linear constraints. The assumptions on coefficients $\boldsymbol{\alpha}$ and $\boldsymbol{\beta}$ remain are described in Section~\ref{sec:pd}. When PPS quantized gradient oracle is mentioned below, it is introduced for a function $f$, as in Section~\ref{sec:pps}. Let there be an optimisation problem
\begin{equation} \label{eq:prim1}
    f(x_*) = \min_{x \in \mathbb{R}^n} f(x),
\end{equation}
where $f: \mathbb{R}^n \to \mathbb{R}$ is convex, differentiable, and $\nabla f$ is $L$-Lipschitz continuous.

\begin{lemma}[Theorem 3.1.4 \cite{dvinskikh2021decentralized}]
    If $\|y_t - x_*\| \leq R_*, \forall t = 0, ..., T$, $G_t$ is defined by Algorithm~\ref{alg:dev1}, then it holds that 
    \begin{align*}
        \min_{x \in \mathbb{R}^n} \Bigg\{\frac{\beta_T}{2} \|x\|^2 &+ \sum_{t=0}^T \alpha_t (f(y_t) + G_t^\top (x - y_t)))\Bigg\} \leq \frac{\beta_T R^2}{2} + A_T f(x_*) +\\
        &+ 2 R_* \left\|\sum_{t=0}^T \alpha_t (\nabla f(y_t) - G_t)\right\| + \sum_{t=0}^T \alpha_t (\nabla f(y_t) - G_t)^\top \mu_t.
    \end{align*}
\end{lemma}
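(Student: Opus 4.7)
The plan is to treat the left-hand side as the minimum value of the estimating function
\[
\psi_T(x) := \tfrac{\beta_T}{2}\|x\|^2 + \sum_{t=0}^T \alpha_t\bigl(f(y_t)+G_t^\top(x-y_t)\bigr),
\]
which, since it is unconstrained over $\mathbb{R}^n$, is upper-bounded by $\psi_T(x_*)$ for free. Writing $G_t = \nabla f(y_t) - \eta_t$ with $\eta_t := \nabla f(y_t) - G_t$, and applying the convexity inequality $f(y_t)+\nabla f(y_t)^\top(x_*-y_t)\leq f(x_*)$ term-wise, the ``deterministic'' part of $\psi_T(x_*)$ collapses into $A_T f(x_*)$, while the quadratic contribution becomes $\tfrac{\beta_T R^2}{2}$ after identifying $R\geq\|x_*\|$. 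This reduces the whole lemma to controlling the stochastic cross term $\sum_{t=0}^T\alpha_t\eta_t^\top(y_t-x_*)$ by $2R_*\,\bigl\|\sum_{t=0}^T\alpha_t\eta_t\bigr\|+\sum_{t=0}^T\alpha_t\eta_t^\top\mu_t$, which is the remaining content of the bound.

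To handle that cross term, I would introduce a predictable auxiliary sequence $\mu_t$ that shadows $y_t-x_*$ without seeing $\eta_t$; the natural choice, following the Devolder-style stochastic dual-averaging template, is to let $\mu_t$ be the corresponding iterate of the ``ghost'' run of Algorithm~\ref{alg:dev1} in which every $G_i$ is replaced by $\nabla f(y_i)$. Decomposing $y_t-x_* = \mu_t + \bigl((y_t-x_*)-\mu_t\bigr)$, the second summand is pathwise bounded by $2R_*$ (using $\|y_t-x_*\|\leq R_*$ by hypothesis and an analogous bound on $\|\mu_t\|$ coming from the ghost recursion), so a summation-by-parts argument converts $\sum_t \alpha_t \eta_t^\top\bigl((y_t-x_*)-\mu_t\bigr)$ into the pathwise estimate $2R_*\bigl\|\sum_t \alpha_t\eta_t\bigr\|$. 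What survives is exactly $\sum_t\alpha_t\eta_t^\top\mu_t$, which is precisely the term kept on the right-hand side of the stated inequality for later treatment by concentration.

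The main obstacle will be the construction and bookkeeping of $\mu_t$: it must be adapted strictly before $\eta_t$ is drawn (so that the leftover sum $\sum_t\alpha_t\eta_t^\top\mu_t$ is a martingale amenable to Bernstein/Hoeffding-type bounds in the subsequent large-deviation analysis of Section~\ref{sec:pd}), while simultaneously staying close enough to $y_t-x_*$ that its discrepancy can be absorbed into the $2R_*\|\cdot\|$ term. Verifying these two properties from the ghost recursion—matching the coupling argument used in \cite{dvinskikh2021decentralized}—is the only nontrivial step; once $\mu_t$ is in hand, the rest of the proof is a mechanical assembly of the convexity inequality, the quadratic bound, and the two pieces of the noise decomposition into the claimed estimate.
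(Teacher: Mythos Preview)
The paper does not prove this lemma itself—it is quoted from \cite{dvinskikh2021decentralized} without argument—so there is no in-paper derivation to compare against. Your first two moves (evaluate the minimum at $x_*$, use $\tfrac{\beta_T}{2}\|x_*\|^2\le\tfrac{\beta_T R^2}{2}$, and collapse the linearisations into $A_T f(x_*)$ by convexity) are correct and reduce everything to bounding the cross term $\sum_t\alpha_t\eta_t^\top(y_t-x_*)$.

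The gap is in that last step. You treat $\mu_t$ as an auxiliary ghost sequence to be \emph{constructed}, but in the paper's notation $\mu_t$ is simply the gradient-evaluation point: the symbol is carried over from Algorithm~\ref{alg:dev}, where the point at which $G_t$ is sampled is called $\mu_t$; in Algorithm~\ref{alg:dev1} that role is played by $y_t$. With the intended reading $\mu_t=y_t$ the argument is one line,
\[
\sum_{t}\alpha_t\eta_t^\top(y_t-x_*)
=\sum_{t}\alpha_t\eta_t^\top\mu_t-\Bigl(\sum_{t}\alpha_t\eta_t\Bigr)^{\!\top} x_*
\le \sum_{t}\alpha_t\eta_t^\top\mu_t+2R_*\Bigl\|\sum_{t}\alpha_t\eta_t\Bigr\|,
\]
since $y_0=0$ forces $\|x_*\|=\|y_0-x_*\|\le R_*$; no ghost run or coupling is needed, and $y_t$ is already predictable relative to $\eta_t$, so the martingale structure you want downstream is automatic. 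Your proposed route, by contrast, does not close: with a nontrivial ghost $\mu_t$ and $b_t:=(y_t-x_*)-\mu_t$, the bound $\|b_t\|\le 2R_*$ alone does \emph{not} yield $\sum_t\alpha_t\eta_t^\top b_t\le 2R_*\bigl\|\sum_t\alpha_t\eta_t\bigr\|$. Abel summation produces $S_T^\top b_T-\sum_{t<T}S_t^\top(b_{t+1}-b_t)$, which involves every partial sum $S_t$ and the total variation of $(b_t)$; this collapses to $2R_*\|S_T\|$ only when $b_t$ is constant---precisely the case $\mu_t=y_t$, which sends you back to the direct argument.
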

The following theorem is similar to Theorem~\ref{thm:pdconv}, but coefficients of the terms therein are smaller than in original theorem, because there is no need to insure with high probability the transition to an average point.

\begin{theorem} \label{thm:pconv}
    For $\delta \in (0, 1)$, given the primal \eqref{eq:prim1} problem, Algorithm~\ref{alg:dev1} ensures that
    \begin{equation*}
        P(f(x_T) - f(x_*) \leq \varepsilon(T, \delta, \boldsymbol{\alpha}, \boldsymbol{\beta}, \boldsymbol{r}, \boldsymbol{M})) > 1 - \delta,
    \end{equation*}
    \begin{flalign*}
        &\text{for }\varepsilon(T, \delta, \boldsymbol{\alpha}, \boldsymbol{\beta}, \boldsymbol{r}, \boldsymbol{M}) = \frac{\beta_T R^2}{2 A_T} + \frac{C^\prime_1 R}{A_T} \sqrt{\sum_{t=0}^T \alpha_t^2 \sigma_{r_t,M_t}^2} + \frac{C^\prime_2}{A_T} \sum_{t=0}^T \frac{A_t \sigma_{r_t,M_t}^2}{\beta_t - L},&
    \end{flalign*}
    where $C^\prime_2 = 1 + \ln \frac{4}{\delta} = O(\log 1/\delta)$ and \[
    C^\prime_1 = \left(2 J(T) + \sqrt{2} - 1\right)  \left(\sqrt{2} + (\sqrt{2} + 1) \sqrt{3 \ln\frac{4}{\delta}}\right) + \sqrt{2} - 2=\] $= O(poly(\log T) \sqrt{\log 1/\delta})$.
\end{theorem}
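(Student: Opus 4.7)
The plan is to convert the deterministic-looking inequality from the preceding lemma into a high-probability bound on $f(x_T) - f(x_*)$ by combining the standard accelerated-method template with sub-Gaussian concentration of the PPS noise. First, using $L$-smoothness of $\nabla f$, convexity of $f$, and the averaging rule $y_{t+1} = \tau_t z_t + (1-\tau_t) x_t$ built into Algorithm~\ref{alg:dev1}, one shows, as in the deterministic argument of \cite{dvinskikh2021decentralized}, that the left-hand side of the lemma is lower-bounded by $A_T f(x_T)$ up to a smoothness residual of the form $\sum_{t=0}^T \frac{\alpha_t^2 \sigma^2_{r_t,M_t}}{\beta_t - L}$ plus a zero-mean stochastic term. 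Setting $S_T := \sum_{t=0}^T \alpha_t(\nabla f(y_t) - G_t)$ and rearranging yields
\[
A_T(f(x_T) - f(x_*)) \le \tfrac{\beta_T R^2}{2} + 2R_*\|S_T\| + \sum_{t=0}^T \alpha_t(\nabla f(y_t) - G_t)^\top \mu_t + \text{smoothness residual},
\]
where the $\mu_t$ are predictable vectors controlled by the algorithm's geometry.

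Next, I concentrate the two noise terms with Lemma~\ref{thm:var} in hand. Conditional on the past, each increment $\alpha_t(\nabla f(y_t) - G_t)$ is a sub-Gaussian vector with proxy $\alpha_t^2 \sigma^2_{r_t,M_t}$. A vector Azuma/Pinelis inequality, as used in Lemma 3.1.3 of \cite{dvinskikh2021decentralized}, then gives $\|S_T\| \le C_1'\sqrt{\sum_t \alpha_t^2 \sigma^2_{r_t,M_t}}$ with probability at least $1 - \delta/2$, yielding the middle term of the theorem. The scalar martingale $\sum_t \alpha_t (\nabla f(y_t)-G_t)^\top \mu_t$ is controlled by a Freedman/Bernstein-type inequality; bounding $\|\mu_t\|^2$ through the smoothness-driven splitting (with denominator $\beta_t - L$) produces the cumulative term $\sum_{t=0}^T \frac{A_t \sigma^2_{r_t,M_t}}{\beta_t - L}$ weighted by $C_2' = 1 + \ln(4/\delta)$. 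A union bound over the two failure events gives total failure probability $\delta$, and dividing the whole inequality by $A_T$ reassembles $\varepsilon(T, \delta, \boldsymbol{\alpha}, \boldsymbol{\beta}, \boldsymbol{r}, \boldsymbol{M})$ as stated.

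The main obstacle is that the preceding lemma assumes $\|y_t - x_*\| \le R_*$ for \emph{every} $t \le T$, whereas a priori this bound is only a consequence of what we are trying to prove. The standard remedy is a stopping-time induction: work up to $\tau := \inf\{t : \|y_t - x_*\| > R_*\}$, apply the concentration arguments conditionally on $\{\tau > t\}$, and close the induction by showing that on the event where all the concentration bounds hold, $\tau$ exceeds $T$. The cost of this coupling is absorbed into the $J(T) = O(\mathrm{poly}(\log T))$ factor inside $C_1'$. Because $x_T$ is already the algorithm's output rather than an ergodic average, no extra high-probability step is needed for the ``transition to the average point'', which is precisely why the constants here are smaller than in the primal-dual Theorem~\ref{thm:pdconv}.
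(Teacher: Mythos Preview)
Your overall plan matches the paper's route---apply the Devolder-style recursion, bound the linear-model minimum by the lemma just before the theorem, concentrate the stochastic residuals using the sub-Gaussian property from Lemma~\ref{thm:var}, and absorb the a~priori bound $\|y_t-x_*\|\le R_*$ into the factor $J(T)$ as in \cite{gorbunov2019optimal}. Your closing remark about why no ``transition to the average point'' is needed here is exactly the paper's rationale for the smaller constants.

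Where the proposal goes wrong is in the bookkeeping of the stochastic terms. First, the residual $\sum_{t=0}^T \frac{A_t}{\beta_t-L}\|\nabla f(y_t)-G_t\|^2$ appears \emph{directly} in the Devolder recursion (Lemma~2 of \cite{devolder2011stochastic}); it is this sum of squared sub-Gaussian norms---not the scalar martingale $\sum_t \alpha_t(\nabla f(y_t)-G_t)^\top\mu_t$---that is concentrated to produce the $C_2'=1+\ln(4/\delta)$ factor. The inner-product martingales (there are two: one in $\mu_t$ and one in $y_t-x_{t-1}$) are each bounded by Azuma-type inequalities and contribute to $C_1'$, not $C_2'$; they yield terms proportional to $R\sqrt{\sum_t\alpha_t^2\sigma_{r_t,M_t}^2}$, not to $\sum_t \frac{A_t\sigma_{r_t,M_t}^2}{\beta_t-L}$. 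Your attribution of $C_2'$ to a Freedman/Bernstein bound on $\sum_t\alpha_t(\nabla f-G_t)^\top\mu_t$ via ``bounding $\|\mu_t\|^2$'' would not recover the stated form.

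Second, and relatedly, there are four concentration events, not two: the norm $\|S_T\|$, the two inner-product martingales, and the sum of squared norms. This is precisely why the constants carry $\ln(4/\delta)$---each event receives probability budget $\delta/4$. A two-event union bound would give $\ln(2/\delta)$, contradicting the stated $C_1',C_2'$. Once you separate the four events and concentrate each one as in the appendix (the $\delta/5$ lemma there, with $5$ replaced by $4$), the constants assemble exactly as claimed.
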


\begin{algorithm}[H]
\caption{Primal-dual method}\label{alg:dev}
\KwIn{iterations number $T$, $\boldsymbol{\alpha}$, $\boldsymbol{\beta}$, $\boldsymbol{r}$, $\boldsymbol{M}$, $PPS$ quantized gradient oracle, restoring function $x(\lambda, \boldsymbol{\xi})$}
\KwOut{$\lambda_T$, $x_T$}
 $\mu_0 = 0$\;
 Sample $\boldsymbol{\xi}_0 \in \Xi^{r_t}$, $\boldsymbol{k}_0, \boldsymbol{l}_0 \in \Xi^{M_t}$\;
$G_0 = PPS(\mu_0, \boldsymbol{\xi}_0, \boldsymbol{k}_0, \boldsymbol{l}_0)$\;
 $\lambda_0 = -\frac{\alpha_0}{\beta_0} G_0$\;
 $A_0 = \alpha_0$\;
 \For{$t = 0, ..., T-1$}{
  $z_t = -\frac{1}{\beta_t} \sum_{i=0}^t \alpha_i G_i$\;
  $A_{t+1} = A_t + \alpha_{t+1}$\;
  $\tau_t = \frac{\alpha_{t+1}}{A_{t+1}}$\;
  $\mu_{t+1} = \tau_t z_t + (1 - \tau_t) \lambda_t$\;
  Sample $\boldsymbol{\xi}_{t+1} \in \Xi^{r_t}$, $\boldsymbol{k}_{t+1}, \boldsymbol{l}_{t+1} \in \Xi^{M_t}$\; 
  $G_{t+1} = PPS(\mu_{t+1}, \boldsymbol{\xi}_{t+1}, \boldsymbol{k}_{t+1}, \boldsymbol{l}_{t+1})$\;
  $\hat{\mu}_{t+1} = z_t -\frac{\alpha_{t+1}}{\beta_t} G_{t+1}$\;
  $\lambda_{t+1} = \tau_t \hat{\mu}_{t+1} + (1 - \tau_t) \lambda_t$\;
  $x_{t+1} = \frac{1}{A_{t+1}} \sum_{i=0}^{t+1} \alpha_i x(- A^\top \mu_i, \boldsymbol{\xi}_i)$\;
 }
\end{algorithm}

\subsection{Primal-dual optimisation} \label{sec:pd}
This section analyses the primal-dual fast gradient method \cite{devolder2011stochastic,krawtschenko2020distributed} with Euclidean prox-function $d: x \mapsto \frac{1}{2}\|x\|^2$, which is listed in Algorithm~\ref{alg:dev}. Large deviations' probabilities' estimations are provided for function value and constraints discrepancies to be minimised.

PPS quantized gradient oracle \eqref{eq:pps}, when mentioned in this section and after, approximates the gradient $\nabla \varphi$ and is constructed based on the oracle $g$ which is introduced in Section~\ref{sec:aff}. For the purpose of flexibly compensating its variance, parameters $r$ and $M$ of PPS differ from iteration to iteration of Algorithm~\ref{alg:dev}. The same notation will be used for PPS oracle's call while its parameters can be deduced from the dimensionality of its random arguments.

Hereinafter, we assume that coefficients $\boldsymbol{\alpha}$ and $\boldsymbol{\beta}$ satisfy the following conditions \cite{devolder2011stochastic}, $\forall t = 0, ..., T$:
\begin{enumerate*}
    \item $\alpha_0 \in (0, 1]$,
    \item $L < \beta_t \leq \beta_{t+1}$,
    \item $\alpha_t^2 \beta_t \leq \beta_{t-1} \sum_{i=0}^t \alpha_t$,
\end{enumerate*}
which will guide us in choosing coefficients to compensate for variance introduced by the gradient's stochasticity and quantization further.

\begin{theorem} \label{thm:pdconv}
    For $\delta \in (0, 1)$, given primal \eqref{eq:prim} and dual \eqref{eq:dual} problems, Algorithm~\ref{alg:dev} ensures that
    \begin{gather*}
        P(f(x_T) - f(x_*) \leq \varepsilon(T, \delta, \boldsymbol{\alpha}, \boldsymbol{\beta}, \boldsymbol{r}, \boldsymbol{M})) > 1 - \delta\\ P\left(\|A x_T - b\| < \frac{\varepsilon(T, \delta, \boldsymbol{\alpha}, \boldsymbol{\beta}, \boldsymbol{r}, \boldsymbol{M})}{R}\right) > 1 - \delta,
    \end{gather*}
    \begin{flalign*}
        &\text{for }\varepsilon(T, \delta, \boldsymbol{\alpha}, \boldsymbol{\beta}, \boldsymbol{r}, \boldsymbol{M}) = \frac{\beta_T R^2}{2 A_T} + \frac{C_3 R + C_4 L/\|A\|_2}{A_T} \sqrt{\sum_{t=0}^T \alpha_t^2 \sigma_{r_t,M_t}^2} + \frac{C_2}{A_T} \sum_{t=0}^T \frac{A_t \sigma_{r_t,M_t}^2}{\beta_t - L},&
    \end{flalign*}
    where $C_4 = \sqrt{2} \left(1 + \sqrt{3 \ln \frac{5}{\delta}}\right) = O(\sqrt{\log 1/\delta})$ and \[C_3 = C_1 + 2\sqrt{2} J(T) \left(1 + \sqrt{3 \ln\frac{5}{\delta}}\right)=\] $= O(poly(\log T) \sqrt{\log 1/\delta}))$.
\end{theorem}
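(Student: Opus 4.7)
The plan is to mirror the analysis of Theorem~\ref{thm:pconv} applied to the dual objective $\varphi$ of \eqref{eq:dual} and then pass back to primal quantities via Fenchel duality, in order to obtain joint high-probability bounds on the primal residual and the feasibility gap. The randomness is handled by the sub-Gaussian bound of Lemma~\ref{thm:var} combined with standard martingale concentration.

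First I would invoke the Lemma preceding Theorem~\ref{thm:pconv} with $\varphi$ in place of $f$, the dual iterates $\mu_t$ in place of $y_t$, and $\lambda_*$ (using $\|\lambda_*\|\leq R$) in place of $x_*$, to obtain
\begin{equation*}
\min_{\lambda\in\mathbb{R}^m}\Bigl\{\tfrac{\beta_T}{2}\|\lambda\|^2 + \textstyle\sum_t\alpha_t\bigl(\varphi(\mu_t)+G_t^\top(\lambda-\mu_t)\bigr)\Bigr\} \leq \tfrac{\beta_T R^2}{2} + A_T\varphi(\lambda_*) + E_T,
\end{equation*}
where $e_t:=\nabla\varphi(\mu_t)-G_t$ and $E_T$ aggregates $\bigl\|\sum_t\alpha_t e_t\bigr\|$ and $\sum_t\alpha_t e_t^\top\mu_t$. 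Using the Fenchel identity $\varphi(\mu_t)+\nabla\varphi(\mu_t)^\top(\lambda-\mu_t) = -f(\bar x_t)-\lambda^\top(A\bar x_t-b)$ for $\bar x_t := \nabla f^*(-A^\top\mu_t)$, the unbiasedness $\mathbb{E}[x(-A^\top\mu_t,\boldsymbol{\xi}_t)]=\bar x_t$ coming from \eqref{eq:varres} (absorbing the centred fluctuation $x_t-\bar x_t$ into the error terms), the averaging rule $x_T=\tfrac{1}{A_T}\sum_i\alpha_i x_i$ of Algorithm~\ref{alg:dev}, convexity of $f$, and strong duality $\varphi(\lambda_*)=-f(x_*)$, I would arrive at a master inequality valid for every $\lambda\in\mathbb{R}^m$:
\begin{equation*}
A_T\bigl(f(x_T)+\lambda^\top(Ax_T-b)-f(x_*)\bigr)+\tfrac{\beta_T}{2}\|\lambda\|^2 \leq \tfrac{\beta_T R^2}{2} + \tilde E_T(\lambda).
\end{equation*}

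Next, the two probability statements follow from a standard ball/weak-duality argument. Taking $\lambda=0$ bounds $f(x_T)-f(x_*)$ directly. Taking $\lambda=2R(Ax_T-b)/\|Ax_T-b\|$ on the sphere of radius $2R$ yields $2A_T R\|Ax_T-b\|+A_T(f(x_T)-f(x_*))\leq -\tfrac{3\beta_T R^2}{2}+\tilde E_T(\lambda)$, and combining this with weak duality $f(x_T)-f(x_*)\geq -R\|Ax_T-b\|$ gives $\|Ax_T-b\|\leq\varepsilon/R$. The radius-$2R$ evaluation is precisely what inflates the primal constant $C'_1$ of Theorem~\ref{thm:pconv} into $C_3$ and introduces the new summand $C_4 L/\|A\|_2\cdot\sqrt{\sum_t\alpha_t^2\sigma_{r_t,M_t}^2}/A_T$; the $1/\|A\|_2$ factor arises when translating a dual-radius bound into a primal feasibility residual through $A$. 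A union bound over the two failure events, each of probability at most $\delta/2$, yields the joint $1-\delta$ guarantee.

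Finally, I would control $\tilde E_T(\lambda)$ in probability. By Lemma~\ref{thm:var}, each $e_t$ is conditionally sub-Gaussian with parameter $\sigma_{r_t,M_t}^2$, so Proposition~2.6.1 of \cite{vershynin2018high} furnishes a sub-Gaussian tail for $\bigl\|\sum_t\alpha_t e_t\bigr\|$ with parameter $\sum_t\alpha_t^2\sigma_{r_t,M_t}^2$, producing the $\sqrt{\sum_t\alpha_t^2\sigma_{r_t,M_t}^2}$ factor and the $\sqrt{3\ln(5/\delta)}$ tail constant inside $C_3$ and $C_4$; a Freedman-type inequality applied to the scalar martingale $\sum_t\alpha_t e_t^\top\mu_t$, combined with the step-size conditions $\alpha_t^2\beta_t\leq\beta_{t-1}\sum_i\alpha_i$ and the $L$-Lipschitzness of $\nabla\varphi$, yields the variance-like summand $\tfrac{C_2}{A_T}\sum_t\tfrac{A_t\sigma_{r_t,M_t}^2}{\beta_t-L}$. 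The main obstacle is the coupling between the random iterate $\mu_t$ and the fresh noise $e_t$ inside this scalar martingale: its per-step conditional sub-Gaussian norm $\alpha_t\sigma_{r_t,M_t}\|\mu_t\|$ is itself data-dependent, so the standard concentration bound does not apply out of the box. Following Theorem~3.1.4 of \cite{dvinskikh2021decentralized}, I would resolve this by an inductive/stopping-time argument showing $\|\mu_t\|=O(R)$ with probability at least $1-\delta/(2T)$ at each step and then taking a union bound over $t=0,\dots,T$; this is exactly the source of the polylogarithmic-in-$T$ factor $J(T)$ appearing inside $C_3$.
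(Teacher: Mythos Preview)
Your overall architecture --- Devolder's descent lemma applied to $\varphi$, Fenchel linearisation, a ball argument for the feasibility residual, sub-Gaussian/martingale concentration, and the inductive $J(T)$ device to control $\|\mu_t\|$ --- matches the paper's. But you have a genuine gap in how you pass to the stochastic primal iterate $x_T$, and you have misidentified where $C_4 L/\|A\|_2$ and the increment $C_3-C_1$ come from.

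In the paper, the entire primal analysis is first carried out with the \emph{deterministic} restoration $\hat{x}_T=\frac{1}{A_T}\sum_t\alpha_t\,x(-A^\top\mu_t)$ (no $\boldsymbol{\xi}_t$). The Fenchel identity plus convexity give a bound on $\varphi(\lambda_T)+f(\hat{x}_T)+2R_*\|A\hat{x}_T-b\|$ with constant $C_1$; the ball-radius factor $2R_*$ is already inside $C_1$. Only at the very last step does one pass from $\hat{x}_T$ to the algorithm's output $x_T$, via
\[
f(\hat{x}_T)\geq f(x_T)-L\|\hat{x}_T-x_T\|,\qquad
\|A\hat{x}_T-b\|\geq\|Ax_T-b\|-\|A\|_2\|\hat{x}_T-x_T\|,
\]
together with the concentration bound $\|\hat{x}_T-x_T\|\leq\frac{\sqrt{2}(1+\sqrt{3\ln(5/\delta)})}{A_T\|A\|_2}\sqrt{\sum_t\alpha_t^2\sigma_{r_t,M_t}^2}$ (all three from Theorem~G.4 of \cite{gorbunov2019optimal}). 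The first inequality produces the $C_4 L/\|A\|_2$ summand; the second produces the increment $C_3-C_1=2\sqrt{2}J(T)(1+\sqrt{3\ln(5/\delta)})$ (the $\|A\|_2$ cancels between the Lipschitz factor and the denominator of $\|\hat{x}_T-x_T\|$). Neither of these constants comes from the radius-$2R$ evaluation you describe.

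Your proposed shortcut, ``absorbing the centred fluctuation $x_t-\bar{x}_t$ into the error terms'', works for the linear piece $\lambda^\top A(x_t-\bar{x}_t)$ but not for $f$: the Fenchel identity delivers $-f(\bar{x}_t)$, and convexity then gives $-f(\hat{x}_T)$, not $-f(x_T)$, because $f$ is not linear. To reach $f(x_T)$ you are forced to bound $f(\hat{x}_T)-f(x_T)$, which is exactly the paper's transfer step and the true source of $C_4 L/\|A\|_2$. Finally, the paper splits the failure probability into five events of mass $\delta/5$ (hence the ubiquitous $\ln(5/\delta)$), not two events of mass $\delta/2$.
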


\subsection{Choice of parameters: gradient noise is significant}
In the following, we focus on the analysis of primal-dual Algorithm~\ref{alg:dev} and associated problem statements \eqref{eq:prim}, \eqref{eq:dual}. Firstly, the parameters $\boldsymbol{\alpha}$, $\boldsymbol{\beta}$, $\boldsymbol{r}$, and $\boldsymbol{M}$ must be clarified to make an algorithm certain. There are two key principles: \begin{enumerate*}
    \item Samples numbers $\boldsymbol{r}$ and $\boldsymbol{M}$ should be chosen such that $\sigma_{r_t,M_t}$ is minimal for a given computational budget, according to Lemma \ref{thm:var}, that is $M_t = \frac{2 (1 - 1/n) B^2}{\mathrm{e} \sigma^2} r_t, \forall t = 0, ..., T$ \label{option:1b},
    \item Coefficients $\boldsymbol{\alpha}$ and $\boldsymbol{\beta}$ are to guarantee convergence rate of the order $\frac{L R^2}{T^2}$ if $\sigma_{r_t,M_t}$ is variable, or $\frac{L R^2}{T^2} + \frac{\sigma R}{\sqrt{T}}$ if $\sigma_{r_t,M_t} = \sigma_{r,M}$, according to \cite{devolder2011stochastic}, that is, $\forall t = 0, ..., T$, $\alpha_t = \frac{t + 1}{2 \sqrt{2}}$, $\beta_t = L + \frac{\sigma_{r_t,M_t} (t + 2)^{3/2}}{2^{1/4} \sqrt{3} R}$.
\end{enumerate*}

\begin{theorem} \label{th:choice}
    Given $\varepsilon > 0$, $0 < \delta < 1$, and $\boldsymbol{M}$ determined by \ref{option:1b}, Algorithm~\ref{alg:dev} can ensure \eqref{eq:target},
    \begin{enumerate}
        \item if $r_t = r \geq 1$, \newline
        in $\max\left\{2 \sqrt{3} r \sqrt{\frac{L R^2}{\varepsilon}}, 1200 \left(2^{1/4} C_2 + C_3\right)^2 \frac{\sigma^2 R^2}{\varepsilon^2}, 1200 C_4^2 \frac{\sigma^2 L^2}{\varepsilon^2 \|A\|_2^2}\right\}$, which is $ \widetilde{O}\left(\max\left\{r \sqrt{\frac{L R^2}{\varepsilon}}, \frac{\sigma^2 R^2}{\varepsilon^2}, \frac{\sigma^2 L^2}{\varepsilon^2 \|A\|_2^2}\right\}\right)$ calls of $g$ oracle, $r$ per iteration, which takes\newline
        $\widetilde{O}\left(B^2 \log n \max\left\{\frac{r}{\sigma^2} \sqrt{\frac{L R^2}{\varepsilon}}, \frac{R^2}{\varepsilon^2}, \frac{L^2}{\varepsilon^2 \|A\|_2^2}\right\}\right)$ bits,
        \item if $r_t = \max\left\{1, \frac{34 \sigma^2 \alpha_t}{\varepsilon L} \max\left\{18 C_2^2, (C_3 + C_4 L / (\|A\|_2 R))^2\right\}\right\}$,\newline
        in $\sum_{t=0}^T r_t = \max\left\{3^{1/2} \sqrt{\frac{L R^2}{\varepsilon}}, 900 C_2^2 \frac{\sigma^2 R^2}{\varepsilon^2}, 50 (C_3 + C_4 L / (\|A\|_2 R))^2 \frac{\sigma^2 R^2}{\varepsilon^2}\right\} = \widetilde{O}\left(\max\left\{\sqrt{\frac{L R^2}{\varepsilon}}, \frac{\sigma^2 R^2}{\varepsilon^2}, \frac{\sigma^2 L^2}{\varepsilon^2 \|A\|_2^2}\right\}\right)$ calls of $g$ oracle, $T = \sqrt{\frac{3 L R^2}{\varepsilon}}$ iterations, which takes\newline
        $\widetilde{O}\left(B^2 \log n \max\left\{\frac{1}{\sigma^2} \sqrt{\frac{L R^2}{\varepsilon}}, \frac{R^2}{\varepsilon^2}, \frac{L^2}{\varepsilon^2 \|A\|_2^2}\right\}\right)$ bits.
    \end{enumerate}
\end{theorem}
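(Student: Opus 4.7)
The plan is to substitute the two parameter-choice principles into the convergence guarantee of Theorem~\ref{thm:pdconv} and bound each of its three terms by a constant fraction of $\varepsilon$. Principle~\ref{option:1b} together with Lemma~\ref{thm:var} collapses the PPS variance to $\sigma_{r_t,M_t}^2 = 100\,\sigma^2/r_t$, folding the quantisation contribution into the usual stochastic-gradient noise scale. Principle~2 then supplies $\alpha_t=(t+1)/(2\sqrt{2})$ and $\beta_t = L + \sigma_{r_t,M_t}(t+2)^{3/2}/(2^{1/4}\sqrt{3}\,R)$, from which $A_T = (T+1)(T+2)/(4\sqrt{2}) = \Theta(T^2)$ and the admissibility conditions ($\alpha_0\in(0,1]$, $L<\beta_t\le\beta_{t+1}$, $\alpha_t^2\beta_t\le\beta_{t-1}A_t$) listed before Theorem~\ref{thm:pdconv} follow by a direct check on the growth rates.

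With these substitutions, the deterministic piece of $\beta_T R^2/(2A_T)$ is $\Theta(LR^2/T^2)$ and forces $T\ge c_1\sqrt{LR^2/\varepsilon}$, matching the $\sqrt{3LR^2/\varepsilon}$ branch. The remaining two terms of $\varepsilon(T,\delta,\boldsymbol{\alpha},\boldsymbol{\beta},\boldsymbol{r},\boldsymbol{M})$ are controlled through the sums $\sum_t \alpha_t^2\sigma_{r_t,M_t}^2$ and $\sum_t A_t\sigma_{r_t,M_t}^2/(\beta_t-L)$; the choice of $\beta_t-L$ in principle~2 was made precisely so that the latter telescopes and both reduce, up to the constants $C_2,C_3,C_4$, to expressions of the form $\Theta(\sigma R/\sqrt{\sum_t r_t})$ or $\Theta(\sigma L/(\|A\|_2\sqrt{\sum_t r_t}))$. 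For case~1 with constant $r_t=r$, the sums simplify directly to $\Theta(\sigma/\sqrt{rT})$, giving the three stochastic inequalities $rT\ge c_2\sigma^2R^2/\varepsilon^2$ and $rT\ge c_3\sigma^2L^2/(\varepsilon^2\|A\|_2^2)$ alongside the deterministic one; the total oracle count is the maximum of the three, with a per-iteration cost of $r$ appearing in the $\sqrt{LR^2/\varepsilon}$ branch because $T$ iterations are required even when noise is negligible. For case~2, I would set $r_t\propto \sigma^2\alpha_t/(\varepsilon L)$ so that $\alpha_t\sigma_{r_t,M_t}^2$ is constant in $t$; then $\sum_t \alpha_t^2\sigma_{r_t,M_t}^2 = \Theta(\sigma^2 A_T/(\varepsilon L/\sigma^2))$, both stochastic summands become $\Theta(\sigma R/T)$ after division by $A_T$, and $T$ is pinned by the deterministic balance alone, while $\sum_t r_t\propto A_T$ removes the spurious factor $r$ present in case~1. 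The bit complexity in both cases follows by multiplying the oracle count by the per-call cost $2|float|+2M_t\log n$ with $M_t = \Theta(B^2 r_t/\sigma^2)$.

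The main technical obstacle I anticipate is carefully tracking the three constants $C_2,C_3,C_4$ through a three-way split of the error budget, especially because the constraint guarantee of \eqref{eq:target} is measured against $\varepsilon/R$ rather than $\varepsilon$, so the $L/\|A\|_2$ contribution enters the balancing asymmetrically and produces its own branch in every $\max$ of the theorem. A smaller but non-trivial point is the $\max\{1,\cdot\}$ in the case~2 formula for $r_t$: in the low-noise regime where the noise-driven prescription would return $r_t<1$, one must fall back to $r_t=1$, and the $\sqrt{LR^2/\varepsilon}$ term in $\sum_t r_t$ is precisely what records this regime. Beyond these accounting matters, the argument is a fairly mechanical instantiation of Theorem~\ref{thm:pdconv} with Lemma~\ref{thm:var}.
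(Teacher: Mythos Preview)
Your proposal is correct and follows essentially the same route as the paper: collapse $\sigma_{r_t,M_t}^2$ to $100\sigma^2/r_t$ via Lemma~\ref{thm:var} and principle~\ref{option:1b}, insert the Devolder-type coefficients $\alpha_t,\beta_t$ into the bound of Theorem~\ref{thm:pdconv}, and then balance the three resulting summands against $\varepsilon$---with constant $r$ in case~1 and with $r_t\propto\alpha_t$ in case~2 so that $\alpha_t\sigma_{r_t,M_t}^2$ becomes $t$-independent. Two minor remarks: the $C_2$ sum is not a telescoping sum but is handled in the paper by the elementary estimate $A_T^{-1}\sum_t A_t(t+2)^{-3/2}\le \tfrac{2}{3}T^{-1/2}$ (appendix lemma), and your displayed expression $\Theta(\sigma^2 A_T/(\varepsilon L/\sigma^2))$ in case~2 has a stray $\sigma^2$---the constant value of $\alpha_t\sigma_{r_t,M_t}^2$ is $\Theta(\varepsilon L)$, not $\Theta(\sigma^4/(\varepsilon L))$---but neither affects the argument.
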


\begin{proof}
    Let us begin with the case $r_t = r$. Since $M_t = \frac{2 (1 - 1/n) B^2}{\mathrm{e} \sigma^2} r_t$, $\sigma^2_{r_t, M_t} = 100 \sigma^2 / r$. By Theorem~\ref{thm:pdconv},
    \begin{equation*}
        \frac{4 \sqrt{2} L R^2}{T^2} + \frac{20 C_3 \sigma R}{\sqrt{3 r T}} + \frac{20 C_4 \sigma L / \|A\|_2}{\sqrt{3 r T}} + \frac{200 C_2 \sigma^2}{3 r \sqrt{T}} \frac{2^{1/4} \sqrt{3 r} R}{10 \sigma} < \varepsilon
    \end{equation*}
    ensures \eqref{eq:target}, which implies that it is sufficient to make
    \begin{equation*}
        T > \max\left\{2 \sqrt{3} \sqrt{\frac{L R^2}{\varepsilon}}, \frac{3 \cdot 400 (C_3 + 2^{1/4} C_2)^2}{r} \frac{\sigma^2 R^2}{\varepsilon^2}, \frac{9 \cdot 400 C_4^2}{3 r} \frac{\sigma^2 L^2}{\varepsilon^2 \|A\|_2^2}\right\},
    \end{equation*}
    each one calls $g$ oracle $r$ times. To find the number of bits needed to describe that number of quantized oracle's call results, one needs to divide it by $r$ (which corresponds to the summation of results of $r$ separate calls of $g$) and multiply it by $2 M \log_2 n$, i.e., by $\frac{4 (1 - 1/n) B^2 r}{\mathrm{e} \sigma^2} \log_2 n$ (which corresponds to the quantization).

    Let us move now to the second case. Using the formula for $r_t$, in the case $r_t > 1$, the same relation between $r_t$ and $M_t$, and the fact that $\sigma_{r_t, M_t}^2 = 100 \sigma^2 / r_t$, one has $\alpha_t^2 \sigma_{r_t,M_t}^2 \leq \frac{50 \varepsilon L R^2}{17 (C_3 R + C_4 L / \|A\|_2)^2} \cdot \alpha_t \Rightarrow \frac{C_3 R + C_4 L/\|A\|_2}{A_T} \sqrt{\sum_{t=0}^T \alpha_t^2 \sigma_{r_t,M_t}^2} = O\left(\sqrt{\varepsilon} \frac{\sqrt{L R^2}}{T}\right)$, and $\frac{2^{1/4} \sqrt{3} C_2 R}{A_T} \sum_{t=0}^T \frac{A_t \sigma_{r_t, M_t}}{(t+2)^{3/2}} \leq \sqrt{\varepsilon} \cdot \frac{5 \cdot 2^{1/4} \sqrt{L R^2}}{\sqrt{51} A_T} \sum_{t=0}^T \frac{A_t}{\sqrt{\alpha_t} (t+2)^{3/2}} = O\left(\sqrt{\varepsilon} \frac{\sqrt{L R^2}}{T}\right)$. After substituting the suggested $T$, we find that \eqref{eq:target} holds. 
\end{proof}

\subsection{Choice of parameters: gradient noise is negligible}
In the case of $\sigma \ll 0$, the upper bounds on the complexity of Algorithm~\ref{alg:dev} in the previous section result in rough estimates of the order $1/\sigma^2$, which is due to the choice of $M_t = \frac{2 (1 - 1/n) B^2}{\mathrm{e} \sigma^2} r_t, \forall t = 0, ..., T$. Instead, we make $M_t$ variable in this section and bind $r_t$ to it using the relation $r_t = \frac{\mathrm{e} \sigma^2}{2 (1 - 1/n) B^2} M_t$ \label{option:3a}. 

\begin{theorem} \label{th:choice2}
    Given $\varepsilon > 0$, $0 < \delta < 1$, and $\boldsymbol{r}$ determined by \ref{option:3a}, Algorithm~\ref{alg:dev} can ensure \eqref{eq:target},
    \begin{enumerate}
        \item if $M_t = M \geq \frac{2 (1 - 1/n) B^2}{\mathrm{e} \sigma^2}$, \newline
        in $\sigma^2 \max\left\{\frac{\mathrm{e} M \sqrt{3}}{(1 - 1/n) B^2} \sqrt{\frac{L R^2}{\varepsilon}}, 1200 \left(2^{1/4} C_2 + C_3\right)^2 \frac{R^2}{\varepsilon^2}, 1200 C_4^2 \frac{L^2}{\varepsilon^2 \|A\|_2^2}\right\}$, which is $ \widetilde{O}\left(\sigma^2 \max\left\{\frac{M}{B^2} \sqrt{\frac{L R^2}{\varepsilon}}, \frac{R^2}{\varepsilon^2}, \frac{L^2}{\varepsilon^2 \|A\|_2^2}\right\}\right)$ calls of $g$ oracle, which takes\newline
        $\widetilde{O}\left(B^2 \log n \max\left\{ \frac{M}{B^2} \sqrt{\frac{L R^2}{\varepsilon}}, \frac{R^2}{\varepsilon^2}, \frac{L^2}{\varepsilon^2 \|A\|_2^2}\right\}\right)$ bits,
        \item if $M_t = M < \frac{2 (1 - 1/n) B^2}{\mathbb{e} \sigma^2}, M \geq 1$, \newline
        in $\frac{(1 - 1/n) B^2}{2 \mathrm{e} M} \max\left\{\frac{4 \mathrm{e} M \sqrt{3}}{(1 - 1/n) B^2} \sqrt{\frac{L R^2}{\varepsilon}}, 1200 \left(2^{1/4} C_2 + C_3\right)^2 \frac{R^2}{\varepsilon^2}, 4800 C_4^2 \frac{L^2}{\varepsilon^2 \|A\|_2^2}\right\} = \widetilde{O}\left(\frac{B^2}{M} \max\left\{\frac{M}{B^2} \sqrt{\frac{L R^2}{\varepsilon}}, \frac{R^2}{\varepsilon^2}, \frac{L^2}{\varepsilon^2 \|A\|_2^2}\right\}\right)$ calls of $g$ oracle, which takes\newline
        $\widetilde{O}\left(B^2 \log n \max\left\{ \frac{M}{B^2} \sqrt{\frac{L R^2}{\varepsilon}}, \frac{R^2}{\varepsilon^2}, \frac{L^2}{\varepsilon^2 \|A\|_2^2}\right\}\right)$ bits,
       \item if $M_t = \max\left\{1, \frac{68 (1 - 1/n) B^2 \alpha_t}{\varepsilon \mathrm{e} L} \max\left\{18 C_2^2, (C_3 + C_4 L / (\|A\|_2 R))^2\right\}\right\}$,
       \newline
        in $\sum_{t=0}^T r_t = \max\left\{3^{1/2} \sqrt{\frac{L R^2}{\varepsilon}}, 900 C_2^2 \frac{\sigma^2 R^2}{\varepsilon^2}, 50 (C_3 + C_4 L / (\|A\|_2 R))^2 \frac{\sigma^2 R^2}{\varepsilon^2}\right\} = \widetilde{O}\left(\max\left\{\sqrt{\frac{L R^2}{\varepsilon}}, \frac{\sigma^2 R^2}{\varepsilon^2}, \frac{\sigma^2 L^2}{\varepsilon^2 \|A\|_2^2}\right\}\right)$ calls of $g$ oracle, $T = \sqrt{\frac{3 L R^2}{\varepsilon}}$ iterations, which takes\newline
        $\sum_{t=0}^T M_t = \widetilde{O}\left(\log n \max\left\{\sqrt{\frac{L R^2}{\varepsilon}}, \frac{B^2 R^2}{\varepsilon^2}, \frac{ B^2 L^2}{\varepsilon^2 \|A\|_2^2}\right\}\right)$ bits.
    \end{enumerate}
\end{theorem}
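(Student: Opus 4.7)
The plan is to mirror the proof of Theorem~\ref{th:choice}, exchanging the roles of $r_t$ and $M_t$: here the binding $r_t = \frac{\mathrm{e}\sigma^2}{2(1-1/n)B^2}\,M_t$ is driven by $M_t$. When this binding produces $r_t \geq 1$, the two terms composing $\sigma_{r_t,M_t}^2$ in Lemma~\ref{thm:var} are exactly equal, so $\sigma_{r_t,M_t}^2 = 100\sigma^2/r_t = \frac{200(1-1/n)B^2}{\mathrm{e} M_t}$. This is the analogue of the identity $\sigma_{r_t,M_t}^2 = 100\sigma^2/r_t$ used in Theorem~\ref{th:choice}, with the $1/\sigma^2$-scaling replaced by a $B^2/M_t$-scaling, and it is the key formula that will let us re-use almost verbatim the chain of inequalities that follow from Theorem~\ref{thm:pdconv}.

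For case~1 ($M_t = M \ge \frac{2(1-1/n)B^2}{\mathrm{e}\sigma^2}$) the binding is admissible, so substituting $\sigma_{r_t,M_t}^2 = \frac{200(1-1/n)B^2}{\mathrm{e}M}$ into Theorem~\ref{thm:pdconv} with $\alpha_t = (t+1)/(2\sqrt{2})$ and $\beta_t = L + \sigma_{r_t,M_t}(t+2)^{3/2}/(2^{1/4}\sqrt{3}R)$ yields an $\varepsilon$-bound structurally identical to case~1 of Theorem~\ref{th:choice}, but with $r$ replaced by $\mathrm{e}\sigma^2 M/(2(1-1/n)B^2)$. Solving the resulting inequality for $T$ and multiplying by $r_t$ per iteration gives the oracle-call bound stated; multiplying $T$ instead by $2M\log_2 n$ (the bits per iteration) gives the bit bound, and the $\sigma^2$ cancels with the $1/\sigma^2$ hidden in $r$, recovering the $\sigma^2$-free bit complexity claimed.

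Case~2 handles the regime $M < \frac{2(1-1/n)B^2}{\mathrm{e}\sigma^2}$, where the binding would require $r_t < 1$. The fix is to clip $r_t = 1$; then $\sigma_{r_t,M_t}^2 = 50\bigl(\frac{2(1-1/n)B^2}{\mathrm{e}M} + \sigma^2\bigr) \le \frac{200(1-1/n)B^2}{\mathrm{e}M}$, because in this regime the first term dominates the second. The same reduction through Theorem~\ref{thm:pdconv} applies with constants inflated by at most a factor of $4$, oracle calls equal $T$ (since $r_t = 1$), and bits per iteration remain $2M\log_2 n$, which explains the explicit $\frac{(1-1/n)B^2}{2\mathrm{e}M}$ prefactor that distinguishes the oracle-call count from the bit count in the stated bound.

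Case~3 is obtained by the same variable-batch trick as case~2 of Theorem~\ref{th:choice}: the prescribed $M_t$ is engineered so that $\alpha_t^2 \sigma_{r_t,M_t}^2 \lesssim \varepsilon L \alpha_t / \max\{18 C_2^2,(C_3 R + C_4 L/\|A\|_2)^2\}$, whence each stochastic term in the bound of Theorem~\ref{thm:pdconv} is $O(\sqrt{\varepsilon L R^2}/T)$; combined with $T = \sqrt{3LR^2/\varepsilon}$ this forces $\varepsilon(T,\ldots) \le \varepsilon$ and hence \eqref{eq:target}. The oracle-call total $\sum_t r_t$ and the bit total $\sum_t M_t$ are then computed by splitting each sum according to whether $M_t = 1$ or $M_t \propto \alpha_t$ dominates in the defining $\max$, exactly paralleling the computation of $\sum_t r_t$ in case~2 of Theorem~\ref{th:choice}. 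I expect this case-splitting bookkeeping for $\sum_t M_t$ to be the main obstacle, but it involves only elementary sums of $t$ and $t^{3/2}$ against $\alpha_t$ and is fully analogous to calculations already carried out in the previous theorem.
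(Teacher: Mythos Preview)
Your proposal is correct and follows essentially the same approach as the paper's proof: both mirror Theorem~\ref{th:choice} with the roles of $r_t$ and $M_t$ swapped, use the key identity $\sigma_{r_t,M_t}^2 = 200(1-1/n)B^2/(\mathrm{e}M_t)$ for cases~1 and~3, and treat case~3 exactly as case~2 of Theorem~\ref{th:choice}. The only minor difference is in case~2: you clip $r_t=1$ and invoke the blunt bound $\sigma_{r_t,M_t}^2 \le \frac{200(1-1/n)B^2}{\mathrm{e}M}$ (valid since $\sigma^2 < \frac{2(1-1/n)B^2}{\mathrm{e}M}$), whereas the paper isolates the single term in the $\varepsilon$-bound that actually changes and handles it via $\sqrt{a+b}\le\sqrt{a}+\sqrt{b}$ together with $2\mathrm{e}\sigma^2 M/(B^2(1-1/n))<4$. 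Your route is slightly coarser but cleaner, and yields the same constants up to the factor-of-$4$ inflation you already flagged.
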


\begin{proof}
    Let us begin with the case $M_t = M \geq \frac{2 (1 - 1/n) B^2}{\mathrm{e} \sigma^2}$. Since $r_t = \frac{\mathrm{e} \sigma^2}{2 (1 - 1/n) B^2} M_t$, $\sigma^2_{r_t, M_t} = 200 (1 - 1/n) \frac{B^2}{\mathrm{e} M}$. By Theorem~\ref{thm:pdconv},
    \begin{gather*}
        \frac{4 \sqrt{2} L R^2}{T^2} + \frac{20 \sqrt{2} \sqrt{1 - 1/n} C_3 B R}{\sqrt{3 \mathrm{e} M T}} + \frac{20 \sqrt{2} \sqrt{1 - 1/n} C_4 B L / \|A\|_2}{\sqrt{3 \mathrm{e} M T}} +\\
        + \frac{400 (1 - 1/n) C_2 B^2}{3 \mathrm{e} M \sqrt{T}} \frac{2^{1/4} \sqrt{3 \mathrm{e} M} R}{10 \sqrt{2} \sqrt{1 - 1/n} B} < \varepsilon
    \end{gather*}
    ensures \eqref{eq:target}, which implies that it is sufficient to make
    \begin{equation*}
        T > \max\left\{2 \sqrt{3} \sqrt{\frac{L R^2}{\varepsilon}}, \frac{3 \cdot 800 (1 - 1/n) (C_3 + 2^{1/4} C_2)^2}{\mathrm{e}M} \frac{B^2 R^2}{ \varepsilon^2}, \frac{9 \cdot 800 (1 - 1/n) C_4^2}{3 \mathrm{e} M} \frac{B^2 L^2}{\varepsilon^2 \|A\|_2^2}\right\},
    \end{equation*}
    each one calls $g$ oracle $r = \frac{\mathrm{e} \sigma^2}{2 (1 - 1/n) B^2} M$ times. To find the number of bits needed to describe that number of quantized oracle call results, one needs to divide it by $r$ (which corresponds to the sum of the results of the separate calls of $r$ of $g$) and multiply it by $2 M \log_2 n$, that is, by $\frac{4 (1 - 1/n) B^2 r}{\mathrm{e} \sigma^2} \log_2 n$ (which corresponds to quantization), taking into account $M \sigma^2 \geq 2 (1 - 1/n) B^2 / \mathrm{e}$. In the case of $M < \frac{2 (1 - 1/n) B^2}{\mathrm{e} \sigma^2}$ all the terms of the final estimates remain the same, although the value of $\sigma_{r_t, M_t}$ depends on both $\sigma$ and $M$ in that case: one can see that the only term that changes due to that reason is one proportional to $R^2$, for which the sufficient condition to ensure \eqref{eq:target} looks like $\frac{20 \sqrt{2} \sqrt{1 - 1/n} C_4 B R}{\sqrt{3 \mathrm{e} M T}} + \frac{80 \cdot 2^{1/4} C_2 B R}{\sqrt{3 \mathrm{e} M T}} \frac{1}{ \sqrt{2 \mathrm{e} \sigma^2 M / (B^2 (1 - 1/n)) + 4}} < \varepsilon$ now,
    which in the worst case is similar to the old one, if we use $\sqrt{a + b} \leq \sqrt{a} + \sqrt{b}$ and that $2 \mathrm{e} \sigma^2 M / (B^2 (1 - 1/n)) < 4$.

    Let us move now to the third case. Using the formula for $M_t$, the same relation between $M_t$ and $r_t$, and the fact that $\sigma_{r_t, M_t}^2 = 200 (1 - 1/n) B^2 / (\mathrm{e} M_t)$, one has $\alpha_t^2 \sigma_{r_t,M_t}^2 \leq \frac{50 \varepsilon L R^2}{17 (C_3 R + C_4 L / \|A\|_2)^2} \cdot \alpha_t \Rightarrow \frac{C_3 R + C_4 L/\|A\|_2}{A_T} \sqrt{\sum_{t=0}^T \alpha_t^2 \sigma_{r_t,M_t}^2} = O\left(\sqrt{\varepsilon} \frac{\sqrt{L R^2}}{T}\right)$, and $\frac{2^{1/4} \sqrt{3} C_2 R}{A_T} \sum_{t=0}^T \frac{A_t \sigma_{r_t, M_t}}{(t+2)^{3/2}} \leq \sqrt{\varepsilon} \cdot \frac{5 \cdot 2^{1/4} \sqrt{L R^2}}{\sqrt{51} A_T} \sum_{t=0}^T \frac{A_t}{\sqrt{\alpha_t} (t+2)^{3/2}} = O\left(\sqrt{\varepsilon} \frac{\sqrt{L R^2}}{T}\right)$. After substituting the suggested $T$, we find that \eqref{eq:target} is valid. 
\end{proof}

Let us focus on the first case described in Theorem~\ref{th:choice2}, because it covers more values of $M$, which makes it useful in practice, because increasing $M$ is computationally free, while it makes the algorithm more stable. One can notice that the estimate of the number of oracle calls in that case is proportional to $\sigma^2$, which can be very small and comparable to $\varepsilon$, so one may wonder if this leads to improvement of the asymptotic estimate of the complexity of the problem in terms of its dependence on $\varepsilon$. Let us examine the following cases:
\begin{enumerate}
    \item $\sigma = \Theta(\varepsilon^{1/2})$. Then $M$ must be $\Omega(1 / \varepsilon)$. The estimate of oracle complexity becomes $\widetilde{O}\left( \max\left\{M \sqrt{\varepsilon} \cdot  \frac{\sqrt{L R^2}}{B^2}, \frac{1}{\varepsilon} \cdot \max\{R, L/\|A\|_2\}\right\}^2\right)$. If $M = \Theta(1 / \varepsilon)$, the iteration complexity term, proportional to $\sqrt{L R^2}$, takes its standard form, while the terms introduced by the noise in gradient are now $O(1 / \varepsilon)$ instead of $O(1 / \varepsilon^2)$. One should notice that the estimate with the improved order of $\varepsilon$ (which can even contradict the lower bound on the complexity of the problem) holds only for $\varepsilon = \Theta(1/n)$. Therefore, its better to specialize in by saying that accuracy $1/n$ can be achieved in $\widetilde{\Theta}\left( \max\left\{\frac{\sqrt{n L R^2}}{B^2}, n \max\{R, L /\|A\|_2\}^2\right\}\right)$ oracle calls,~--- let us call it ``specific complexity'' of the problem. One can further specialize this estimate depending on properties of the oracle; for example, if the gradient lies in the $\ell_2$-ball, $B$ is proportional to $\sqrt{n}$, the iteration complexity term ceases to depend on $n$, which slightly improves the estimate. 
    \item $\sigma = \Theta(\varepsilon)$. Therefore, $M = \Omega(1 / \varepsilon^2)$, and the estimate becomes $\widetilde{O}\left( \max\left\{M \varepsilon^{3/2} \frac{\sqrt{L R^2}}{B^2}, \max\{R^2, \frac{L^2}{\|A\|_2^2}\}\right\}\right)$. If $M = \Theta(1/\varepsilon^2)$ improves the complexity of the problem only for $\varepsilon = \Omega(1/\sqrt{n})$, which might be useful in some applications but does not allow us to assess the algorithm using the notion of specific complexity.
\end{enumerate}

\RestyleAlgo{ruled}
\begin{algorithm}[H]
\caption{Decentralised primal-dual fast gradient method}\label{alg:devdec}
\KwIn{iterations number $T$, $\boldsymbol{\alpha}$, $\boldsymbol{\beta}$, $\boldsymbol{r}$, $\boldsymbol{M}$, $(PPS^i)_{i \in G}$ quantized gradient oracle, primal variable restoring function $x^i(\lambda, \boldsymbol{\xi})$ for every $i \in G$, Laplacian matrix $W$}
\KwOut{$\lambda_T$, $x_T$}
\begin{minipage}{0.44\textwidth}
 \For{{\normalfont \textbf{each $i$th node}}}{
     $\mu_0^i = 0$\;
     Sample $\boldsymbol{\xi}_0^i \in \Xi^{r_t}$, $\boldsymbol{k}_0^i, \boldsymbol{l}_0^i \in \Xi^{M_t}$\;
    $G_0^i = PPS^i(\mu_0^i, \boldsymbol{\xi}_0^i, \boldsymbol{k}_0^i, \boldsymbol{l}_0^i)$\;
     Send $G_0^i$ to each $j \mathsf{E} i$\;
 }
 \For{{\normalfont \textbf{each $i$th node}}}{
     $\lambda_0^i = -\frac{\alpha_0}{\beta_0} \sum_{j\mathsf{E} i} W_{ij} G_0^j$\;
 }
 $A_0 = \alpha_0$\;
 See next column\;
 \vspace{0.4cm}
\end{minipage}
\begin{minipage}{0.44\textwidth}
 \For{$t = 0, ..., T-1$}{
  $A_{t+1} = A_t + \alpha_{t+1}$\;
  $\tau_t = \frac{\alpha_{t+1}}{A_{t+1}}$\;
 \For{{\normalfont \textbf{each $i$th node}}}{
  $z_t^i = -\frac{1}{\beta_t} \sum_{k=0}^t \alpha_k \sum_{j\mathsf{E} i} W_{ij} G_k^j$\;
  $\mu_{t+1}^i = \tau_t z_t^i + (1 - \tau_t) \lambda_t^i$\;
  Sample $\boldsymbol{\xi}_{t+1}^i \in \Xi^{r_t}$, $\boldsymbol{k}_{t+1}^i, \boldsymbol{l}_{t+1}^i \in \Xi^{M_t}$\; 
  $G_{t+1}^i = PPS^i(\mu_{t+1}^i, \boldsymbol{\xi}_{t+1}^i, \boldsymbol{k}_{t+1}^i, \boldsymbol{l}_{t+1}^i)$\;
  Send $G_{t+1}^i$ to each $j \mathsf{E} i$\;
  }
 \For{{\normalfont \textbf{each $i$th node}}}{
  $\hat{\mu}_{t+1}^i = z_t^i -\frac{\alpha_{t+1}}{\beta_t} \sum_{j\mathsf{E} i} W_{ij} G_{t+1}^j$\;
  $\lambda_{t+1}^i = \tau_t \hat{\mu}_{t+1}^i + (1 - \tau_t) \lambda_t^i$\;
  $x_{t+1}^i = \frac{\alpha_{t+1}}{A_{t+1}} x^i(\mu_i, \boldsymbol{\xi}_i) + \frac{A_t}{A_{t+1}} x_t^i$\;
  }
 }
\end{minipage}
\end{algorithm}

\section{Distributed setup} \label{sec:distr}
The decentralised optimisation task can be considered as an affine-constrained optimisation problem, described in Section~\ref{sec:aff}, in a way that we study further. Network of a distributed computing system is presented by a connected graph $(G, \mathsf{E})$, where the number of nodes is $|G| = m$ and $\mathsf{E} = \{(i, j)\;|\;i\text{ and }j\text{ are neighbours}\}$, $\mathsf{E}$ is symmetric (graph is undirected). Each $i$th node has its own function $f_i$, variable $x^i$, they are tabulated in $\boldsymbol{x} = (x^1\;\dots\;x^m)$. The Laplacian matrix $W \in \mathbb{R}^{m \times m}$ is associated with the graph, we introduce a corresponding averaging operator $\boldsymbol{W} = W \otimes I_n$, where $I_n \in \mathbb{R}^{n\times n}$ is identity and $\otimes$ is a Kronecker product. The smallest eigenvalue of $W$ is zero, the second smallest is $\lambda_2(W) > 0$  \cite{mohar1991eigenvalues}, the largest is $\|W\|_2$. The desired solution for a decentralised optimisation problem assumes that it is approximately consensus, that is $x^1 = \dots = x^m$, which is equivalent to $\sqrt{\boldsymbol{W}} \boldsymbol{x} = 0$ \cite{scaman2017optimal}. Finally, let there be a decentralised optimisation problem
\begin{equation*}
    f(\boldsymbol{x}_*) = \min_{\stackrel{x^1, ..., x^m \in \mathbb{R}^n}{\sqrt{\boldsymbol{W}}\boldsymbol{x} = 0}} \left\{ f(\boldsymbol{x}) = \frac{1}{m} \sum_{i=0}^m f_i(x^i) \right\}
\end{equation*}
which is of the form \eqref{eq:prim}. Additionally, $f$ is assumed to be $\frac{\gamma}{m}$-strongly convex. The corresponding dual problem is 
\begin{equation} \label{eq:dual_distr}
    \varphi(\sqrt{\boldsymbol{W}}\boldsymbol{\lambda}_*) = \min_{\lambda^1, ..., \lambda^m \in \mathbb{R}^n} \left\{ \varphi(\sqrt{\boldsymbol{W}}\boldsymbol{\lambda}) = \frac{1}{m} \sum_{i=0}^m \varphi_i(m (\sqrt{\boldsymbol{W}} \boldsymbol{\lambda})^i) \right\},
\end{equation}
where $\varphi_i = f_i^*$. By Lemma 2, \cite{dvurechenskii2018decentralize}, \eqref{eq:dual_distr} has $L$-Lipschitz continuous gradient, $L = \frac{m \|W\|_2}{\gamma}$, which is $\nabla \varphi(\sqrt{\boldsymbol{W}} \boldsymbol{\lambda}) = \sqrt{\boldsymbol{W}} \boldsymbol{x}(\sqrt{\boldsymbol{W}} \boldsymbol{\lambda})$, where $\boldsymbol{x}(\cdot)$ is \eqref{eq:xrestore} but generalised to tabulated $\boldsymbol{x}$. Hereinafter, we denote $\lambda = \sqrt{\boldsymbol{W}} \boldsymbol{\lambda}$, and similarly for all plain counterparts of bold variables: this replacement is convenient for distributed implementation because
\begin{equation*}
    \sqrt{\boldsymbol{W}} (\boldsymbol{\lambda} - \nabla \varphi(\sqrt{\boldsymbol{W}} \boldsymbol{\lambda})) = \lambda - \boldsymbol{W} \boldsymbol{x}(\sqrt{\boldsymbol{W}} \boldsymbol{\lambda}) = \lambda - \boldsymbol{W} \boldsymbol{x}(\lambda),
\end{equation*}
which is used in Algorithm~\ref{alg:devdec}. Given the $\sigma_i$-sub-Gaussian stochastic oracle $\nabla F_i^*(\lambda^i, \xi)$ for $\nabla \varphi_i(\lambda^i)$, it holds that $\nabla \varphi(\lambda, \xi)$ is sub-Gaussian with $\sigma^2 = O(\|W\|_2 m \sigma_i^2)$, by Lemma 3.2.1, \cite{dvinskikh2021decentralized}. The PPS quantized oracle $PPS^i$ is introduced for each $i$th node based on $\nabla F_i^*(\lambda^i, \xi)$. By Theorem 3.2.3, \cite{dvinskikh2021decentralized}, $\|\lambda_*\|^2 \leq R^2 = \frac{B_*^2}{m \lambda_2(W)}$, where $B_*$ is such that $\|\nabla f_i(x^*)\| \leq B_*$. For brevity, we denote $\chi(W) = \frac{\|W\|_2}{\lambda_2(W)}$. $d_j$ denotes the degree of $j$th node, $d = \max_{j\in G} d_j$. $D$ denotes the diameter of the graph.\newline
\begin{cor} \label{cor:main}
    Given $\varepsilon > 0$, $0 < \delta < 1$, $\boldsymbol{r}$ determined by \eqref{eq:r}, and $\boldsymbol{M}$ determined by \ref{option:1b}, Algorithm~\ref{alg:devdec} can ensure $P(f(\boldsymbol{x}) - f(\boldsymbol{x}_*) \leq \varepsilon) > 1 - \delta$ and $P\left(\|\sqrt{\boldsymbol{W}} \boldsymbol{x}\| \leq \frac{\varepsilon \sqrt{2 m}}{B_*}\right) > 1 - \delta$, if any node sends 
    \begin{flalign*}
        &\widetilde{O}\left(B^2 d \log n \max\left\{\frac{1}{\sigma_i^2} \sqrt{\frac{D B_*^2}{\gamma \varepsilon m d}},\frac{D B_*^2}{\varepsilon^2}, \frac{m^2}{\gamma^2 \varepsilon^2}\right\}\right)\text{ bits.}&
    \end{flalign*}
\end{cor}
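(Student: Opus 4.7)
The plan is to specialize Theorem~\ref{th:choice} to the decentralised reformulation of Section~\ref{sec:distr}. The identifications needed are already collected there: $A = \sqrt{\boldsymbol{W}}$, so that $\|A\|_2^2 = \|W\|_2$; the dual $\varphi$ has Lipschitz continuous gradient with $L = m\|W\|_2/\gamma$ by Lemma~2 of \cite{dvurechenskii2018decentralize}; the dual optimum obeys $R^2 \le B_*^2/(m\lambda_2(W))$ by Theorem~3.2.3 of \cite{dvinskikh2021decentralized}; and the oracle $\nabla\varphi(\lambda,\xi)$ assembled from the local $\nabla F_i^*$ is sub-Gaussian with parameter $\sigma^2 = O(\|W\|_2\, m\, \sigma_i^2)$ by Lemma~3.2.1 of \cite{dvinskikh2021decentralized}. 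With these, the two conditions in \eqref{eq:target} specialize, after substituting $R$, to $f(\boldsymbol x)-f(\boldsymbol x_*)\le\varepsilon$ and $\|\sqrt{\boldsymbol W}\boldsymbol x\|\le\varepsilon/R = \varepsilon\sqrt{m\lambda_2(W)}/B_*$, which is implied by the stated bound $\varepsilon\sqrt{2m}/B_*$ once the standard constant-factor slack for $\lambda_2(W)$ used throughout the paper is absorbed.

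Next, I would invoke Theorem~\ref{th:choice} case~1 with $\boldsymbol r$ given by \eqref{eq:r} and $\boldsymbol M$ given by \ref{option:1b}. Substituting the distributed identifications,
\[
LR^2 = \chi(W)\, B_*^2/\gamma, \qquad R^2 = B_*^2/(m\lambda_2(W)), \qquad L^2/\|A\|_2^2 = m^2\|W\|_2/\gamma^2,
\]
the total (aggregate) bit-budget of Theorem~\ref{th:choice} rewrites as a maximum of three terms depending on $\chi(W)$, $\|W\|_2$, $\lambda_2(W)$, $B_*$, $\sigma_i$, $\gamma$, and $m$. To convert this centralised count into a \emph{per-node} count I use the broadcasting structure of Algorithm~\ref{alg:devdec}: in each iteration, every node computes a single PPS-quantized gradient and forwards it along each of its at most $d$ incident edges; hence one multiplies the per-iteration quantization payload $2M\log_2 n$ by $d$, so that the per-node bit-count exceeds the oracle-to-oracle count of Theorem~\ref{th:choice} by the factor $d$ that appears outside the maximum in the statement.

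Finally, I would collapse spectral quantities into combinatorial ones. Gershgorin gives $\|W\|_2 \le 2d$, and the standard bound $\lambda_2(W)^{-1} = O(mD)$ for connected graphs from \cite{mohar1991eigenvalues} gives $\chi(W) = O(dD)$ after cancellation, which is what transforms the square-root term of Theorem~\ref{th:choice} into $\sqrt{DB_*^2/(\gamma\varepsilon m d)}$ and the pure-noise terms into $DB_*^2/\varepsilon^2$ and $m^2/(\gamma^2\varepsilon^2)$ respectively. Absorbing $\log(1/\varepsilon)$ and $\sqrt{\log(1/\delta)}$ factors into $\widetilde O$ then produces the stated per-node bit bound, and the two probabilistic guarantees descend directly from those in Theorem~\ref{thm:pdconv} via the union bound.

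The main obstacle is bookkeeping rather than any deep new argument. One must keep straight (i) the distinction between $\sigma$ (which scales with $\sqrt{m\|W\|_2}\,\sigma_i$) and $\sigma_i$, so that the factor $1/\sigma_i^2$ in the first term of the corollary emerges from $r/\sigma^2$ in Theorem~\ref{th:choice} after the $\|W\|_2 m$ inside $\sigma^2$ is partially absorbed by the combinatorial bound on $\chi(W)$; (ii) that the per-iteration broadcast amplifies the communication but not the iteration count, so the factor of $d$ multiplies the whole maximum rather than appearing inside each term; and (iii) that both the objective and constraint-violation bounds of Theorem~\ref{thm:pdconv} hold with the correct $R$, so that the stated feasibility radius follows by the substitution $R = B_*/\sqrt{m\lambda_2(W)}$ without degrading the accuracy budget.
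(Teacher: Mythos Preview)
Your plan---specialize Theorem~\ref{th:choice} to the distributed reformulation via the identifications of Section~\ref{sec:distr}, attach a factor $d$ for the per-node broadcast, and then convert spectral quantities into graph-combinatorial ones---is exactly the route the paper takes; its own proof is the single sentence ``follows from upper and lower bounds on $\|W\|_2$ and $\lambda_2$.''

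Two details in your execution need correction. First, Gershgorin gives the \emph{upper} bound $\|W\|_2\le 2d$, but once you substitute $\sigma^2=O(\|W\|_2\,m\,\sigma_i^2)$ the bit count of Theorem~\ref{th:choice} acquires a prefactor $1/(\|W\|_2 m)$, and to upper-bound that you need a \emph{lower} bound on $\|W\|_2$ (for the graph Laplacian, $\|W\|_2\ge d+1$, which is in the Fiedler/Anderson--Morley references the paper cites). This is why the paper's one-line proof insists on ``upper \emph{and} lower bounds.'' Second, from $\|W\|_2\le 2d$ and $\lambda_2(W)^{-1}=O(mD)$ one gets $\chi(W)=\|W\|_2/\lambda_2(W)=O(m d D)$, not $O(dD)$ as you write; the extra factor of $m$ is what later cancels against the $1/m$ in the prefactor, so your intermediate claim is off even though the endpoint is right. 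Finally, a minor inconsistency: you write ``case~1'' of Theorem~\ref{th:choice}, but the schedule \eqref{eq:r} you invoke is precisely the variable-batch choice of case~2.
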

\begin{proof}
    It follows from upper and lower bounds on $\|W\|_2$ and $\lambda_2$ from \cite{fiedler1973algebraic,anderson1985eigenvalues,mohar1991eigenvalues}.
\end{proof}

\section{Application example}

\subsection{WB problem}
Let there be a network where $i$th node can draw samples from the distribution $\mu^i$, which belongs to the set of continuous probability distributions $\mathcal{P}(X)$ over metric space $X$ and has density function $q^i(x)$. The task is to collaboratively compute the entropy-regularized semi-discrete Wasserstein barycentre, which is discrete probability distribution $\nu$, that is $\nu = \sum_{i=1}^n \nu_i \delta(y_i)$ for some $y_1, ..., y_n \in Y$, and $\nu \in \sigma(n)$ minimises $\frac{1}{m} \sum_{i=1}^{m} \mathcal{W}_{\gamma,\mu^i}(\nu)$,
where $\mathcal{W}_{\gamma,\mu}(\nu)$ is the entropy-regularized semi-discrete Wasserstein distance
\begin{align}\label{WassDis}
\mathcal{W}_{\gamma,\mu}(\nu) = \min_{\pi \in \Pi(\mu,\nu)}\left\{\sum_{i=1}^n \int_X c(y_i, x) \pi_i(x) \diff x +  \gamma \sum_{i=1}^n \int_X
\pi_i(x) \log \frac{\pi_i(x)}{\xi} \diff x\right\},
\end{align}
where $u$ is the density of uniform distribution on $X \times Y$, $c(y, x)$ denotes the (continuous) cost of transporting a unit of mass from point $y \in Y$ to point $x \in X$, $\Pi(\mu,\nu)$ is the set of admissible coupling measures from $\mathcal{P}(X) \times \sigma(n)$ which satisfy $\sum_{i=1}^n \pi_i(x) = q(x), \forall x \in X$ and $\int_{X} \pi_i(x)\diff x = \nu_i, \forall i \in \{1, ..., n\}$,
and $\gamma \geq 0$ is the regularization parameter. Note that, unlike~\cite{genevay2016stochastic}, we regularize the problem by the Kullback--Leibler divergence between $\nu$ and uniform distribution $\xi$, which gives us an explicit formula for the conjugate $\mathcal{W}^*_{\gamma,\mu}$. According to Section~\ref{sec:distr}, original problem is equivalent to $\max_{\nu^1,\dots, \nu^m \in \sigma(n), \sqrt{\boldsymbol{W}} \boldsymbol{\nu}=0} -\frac{1}{m}\sum_{i=1}^{m} \mathcal{W}_{\gamma}(\mu^i, p^i)$,
and its dual is the minimisation of $\mathcal{W}_{\gamma,\boldsymbol{\mu}}^*(\boldsymbol{\lambda}) = \frac{1}{m}\sum_{i=1}^{m} \mathcal{W}^*_{\gamma,\mu^i}(m[\sqrt{\boldsymbol{W}}\boldsymbol{\lambda}]_i)$. 
Similarly, we denote $\lambda = m[\sqrt{\boldsymbol{W}}\boldsymbol{\lambda}]$.
\begin{lemma}[Lemma 1, \cite{dvurechenskii2018decentralize}] \label{lm:Lipschitz_gradient} 
For any positive Radon probability measure $\mu \in \mathcal{P}(X)$ with density $q(x)$ and random variable $x \sim \mu$, it holds that
\begin{gather*}
\mathcal{W}_{\gamma,\mu}^*(\lambda) = \mathbb{E}_{x}\left[ \gamma
\log \frac{\sum_{i=1}^n\exp \frac{\lambda_i - c(z_i, x)}{\gamma}}{q(x)}\right]\\ [\nabla \mathcal{W}_{\gamma,\mu}^*(\lambda)]_i = \mathbb{E}_{x}
 \left[\frac{\exp \frac{\lambda_i-c(z_i, x)}{\gamma}}{\sum_{j=1}^n\exp\frac{\lambda_j-c(z_j, Y)}{\gamma}}\right],
\end{gather*}
and $\nabla \mathcal{W}_{\gamma,\mu}^*(\lambda)$ is Lipschitz continuous with 
$L = \frac{m}{\gamma}$.
\end{lemma}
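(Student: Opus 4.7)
The plan is to compute the Fenchel conjugate $\mathcal{W}^*_{\gamma,\mu}(\lambda)$ in closed form by exchanging the outer supremum over $\nu$ with the inner minimum over couplings $\pi$ that defines $\mathcal{W}_{\gamma,\mu}$, and then to read off the gradient and its Lipschitz constant from the resulting soft-max expression.

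First, I would write $\mathcal{W}^*_{\gamma,\mu}(\lambda) = \sup_{\nu \in \Delta_n} \{\lambda^\top \nu - \min_{\pi \in \Pi(\mu,\nu)} J(\pi)\}$, where $J(\pi) = \sum_i \int_X (c(z_i,x) \pi_i(x) + \gamma \pi_i(x) \log(\pi_i(x)/\xi))\,\mathrm{d}x$. Since the outer supremum over $\nu$ is unrestricted on $\Delta_n$, I can swap the nested optimisation and absorb the constraint $\nu_i = \int \pi_i(x)\,\mathrm{d}x$ into the objective, leaving only the $X$-marginal constraint $\sum_i \pi_i(x) = q(x)$:
\begin{align*}
\mathcal{W}^*_{\gamma,\mu}(\lambda) = \sup_{\pi \geq 0,\; \sum_i \pi_i(x) = q(x)} \Big\{\sum_{i=1}^n \int_X (\lambda_i - c(z_i,x))\,\pi_i(x)\,\mathrm{d}x - \gamma \int_X \sum_i \pi_i(x)\log\frac{\pi_i(x)}{\xi}\,\mathrm{d}x\Big\}.
\end{align*}
The outer sup and the integral over $x$ can then be interchanged because the pointwise sub-problem is strictly concave and has a unique maximiser depending measurably on $x$.

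Second, for each fixed $x$ the pointwise problem is a standard entropy-constrained concave maximisation with Lagrange multiplier $\beta$ for $\sum_i p_i = q(x)$. The first-order condition $\lambda_i - c(z_i,x) - \gamma(\log(p_i/\xi) + 1) = \beta$ gives the Gibbs form $p_i^\ast(x) = q(x)\cdot \exp((\lambda_i - c(z_i,x))/\gamma)/Z(x,\lambda)$, where $Z(x,\lambda) := \sum_j \exp((\lambda_j - c(z_j,x))/\gamma)$. Substituting $p^\ast$ back, the linear and entropy terms collapse to $\gamma q(x) \log Z(x,\lambda) - \gamma q(x) \log q(x)$ (the $\log \xi$ contribution is a $\lambda$-independent constant), and integrating against $\mathrm{d}x$ gives exactly the announced expression for $\mathcal{W}^*_{\gamma,\mu}(\lambda)$. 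Differentiating under the expectation is justified by smoothness and uniform boundedness of the soft-max on any compact neighbourhood of $\lambda$, yielding the stated coordinate formula for $\nabla \mathcal{W}^*_{\gamma,\mu}(\lambda)$.

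Finally, for Lipschitz continuity I would bound the Hessian pointwise: for each $x$, the Hessian of $\lambda \mapsto \gamma \log Z(x,\lambda)$ equals $(1/\gamma)(\mathrm{diag}(p) - p p^\top)$ where $p$ is the soft-max, i.e.\ $1/\gamma$ times the covariance of a categorical distribution on $n$ atoms, which is PSD with spectral norm at most $1/\gamma$. Since expectation is a convex combination of these Hessians, the Hessian of $\mathcal{W}^*_{\gamma,\mu}$ inherits the same operator-norm bound, and the extra factor of $m$ in the stated $L = m/\gamma$ is picked up through the normalisation $\mathcal{W}^*_{\gamma,\boldsymbol{\mu}}(\boldsymbol{\lambda}) = \frac{1}{m}\sum_i \mathcal{W}^*_{\gamma,\mu^i}(m[\sqrt{\boldsymbol{W}}\boldsymbol{\lambda}]_i)$ of the decentralised dual~\eqref{eq:dual_distr}. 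The main obstacle I anticipate is the rigorous justification of swapping $\sup_\pi$ with $\int_X$ in the first step: this requires a measurable-selection argument to certify that the pointwise maximiser $p^\ast(x)$ depends measurably on $x$, which is standard under continuity of $c(z_i,\cdot)$ and integrability of $q$, but needs to be stated explicitly.
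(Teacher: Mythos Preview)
The paper does not supply a proof of this lemma; it is quoted verbatim from \cite{dvurechenskii2018decentralize} (as the attribution ``Lemma~1, \cite{dvurechenskii2018decentralize}'' signals), so there is no in-paper argument to compare against. Your derivation is the standard one and is correct in substance: absorb the $\nu$-marginal constraint into the coupling, solve the pointwise entropy-regularised problem to obtain the Gibbs/soft-max weights, integrate to get the log-sum-exp formula, and differentiate under the integral to obtain the gradient. The Hessian bound via the categorical covariance $\tfrac{1}{\gamma}(\operatorname{diag}(p)-pp^\top)\preceq \tfrac{1}{\gamma}I$ is exactly the mechanism used in \cite{dvurechenskii2018decentralize}.

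One point worth flagging: as written, a single $\mathcal{W}^*_{\gamma,\mu}$ has gradient Lipschitz constant $1/\gamma$, not $m/\gamma$; your reading that the extra factor of $m$ enters only through the rescaling $\lambda^i = m[\sqrt{\boldsymbol{W}}\boldsymbol{\lambda}]_i$ in the aggregate dual \eqref{eq:dual_distr} is the correct one, and it is consistent with the sentence immediately preceding the lemma in the paper (which in fact invokes a separate result, Lemma~2 of \cite{dvurechenskii2018decentralize}, for the Lipschitz constant of the full dual with the additional $\|W\|_2$ factor). So the constant $L=m/\gamma$ in the lemma statement should be understood in that aggregated sense rather than as a property of the scalar conjugate alone.
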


\subsection{Numerical experiment}
All the results in this section were obtained by a simulation of distributed execution performed on a single personal computer with the Apple M2 chip with an 8-core CPU and 8GB of unified memory. Source code is available at \url{https://github.com/PPS-quantization/Wasserstein-barycentre}.

In the first experiment, distributions $\mu^i$ are chosen to be Gaussian with randomly generated mean and variance parameters. We compare our approach with the non-quantized accelerated gradient method (AGM) \cite{dvurechenskii2018decentralize}. Figure~\ref{fig:gaussians} shows curves of the consensus gap and the value of the dual function, associated with the WB problem, for approximate solutions given by Algorithm~\ref{alg:devdec}, with different sampling schemes parameterised by sequences $M_t$ and $r_t$. Figures~\ref{fig:gaussians2},\ref{fig:gaussians3} consider networks with different topology. Figure~\ref{fig:gaussians2vis} shows the visualised approximate barycentric Gaussians found by Algorithm~\ref{alg:devdec} after $k=\{10,100,200,500\}$ iterations. Predictably, the algorithm from \cite{dvurechenskii2018decentralize} demonstrates the best performance because of non-quantized communication between nodes. At the same time, one can see that even with a small number of samples, that is $M_t=1$ and $r_t=10$, Algorithm~\ref{alg:devdec} can rapidly converge to the barycentre with significantly less communication overhead. 

Secondly, we carried out an experiment with the MNIST dataset \cite{deng2012mnist}. $40$ images of the digit ``2'' with dimensions $100\times100$ (vectorised so that $n = 10^4$) define $\mu^i$. The network is defined by a random Erd\H{o}s--R\'enyi graph. Figure~\ref{fig:bad} shows the visualised approximate barycentre found by AGM \cite{dvurechenskii2018decentralize}, and by  Algorithm~\ref{alg:devdec} after 5000 iterations for $M_t = r_t =100$ (all nodes are shown) and other values of $M_t$ and $r_t$. One can see that Algorithm~\ref{alg:devdec} achieves similar quality of the approximate barycentre, when using constant batch size $r_t$ and much smaller number $M_t \ll n$ of bits sent in each communication round than AGM. 

\begin{figure}[H]
    \centering
    \subfigure[Consensus gap and value of the dual function. Erd\H{o}s--R\'enyi random graph. Constant sampling scheme]{%
        \includegraphics[trim=9 7 12 5,clip,width=0.32\linewidth]{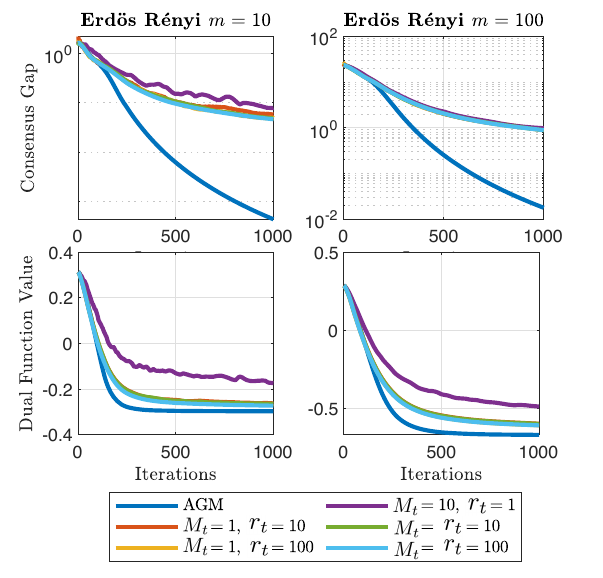}
        \label{fig:gaussians}
    }
    \subfigure[Constraints discrepancy and value of the dual function. Different graph topology with $m=30$. Constant sampling scheme]{%
        \includegraphics[trim=35 20 43 20,clip,width=0.32\linewidth]{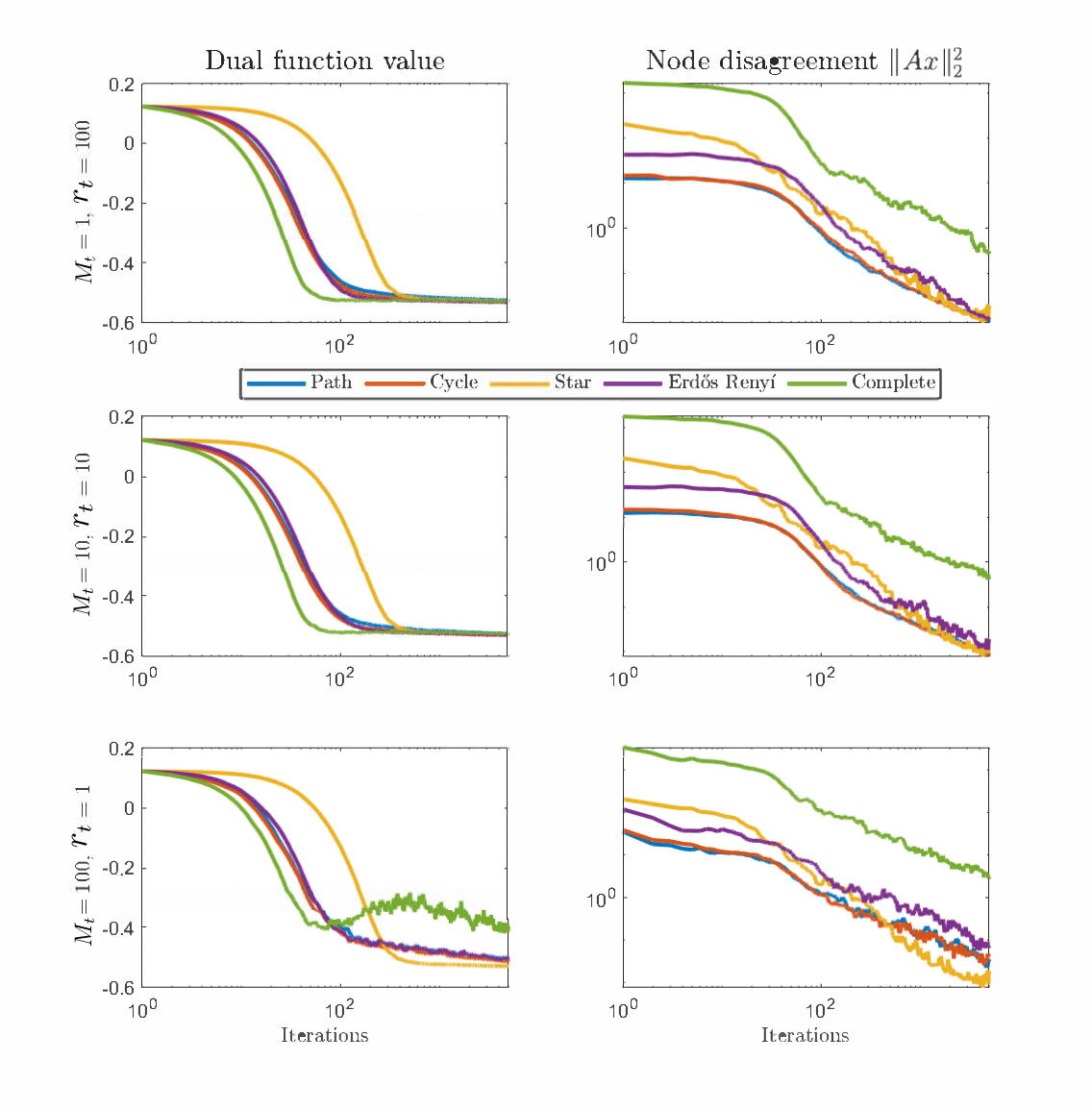}
        \label{fig:gaussians2}
    }
    \subfigure[Consensus gap and value of the dual function. Different graph topology. Time-dependent sampling $M_t = r_t = O(t)$]{%
       \includegraphics[width=0.29\linewidth]{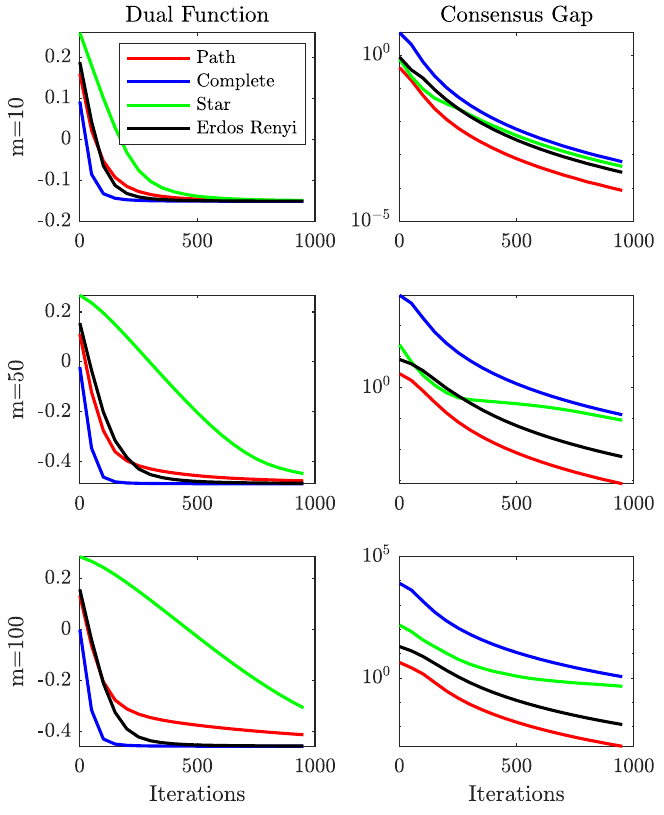}
        \label{fig:gaussians3}
    }

    \caption{Convergence curves of Algorithm~\ref{alg:devdec} and AGM for different network topology, number of nodes, and sampling schemes}
    \label{fig:combined}
\end{figure}
\begin{figure}[H]
    \centering
    \subfigure[MNIST $100\times100$ images. 5000 iterations, different $M_t$ and $r_t$]{%
\includegraphics[width=0.35\textwidth]{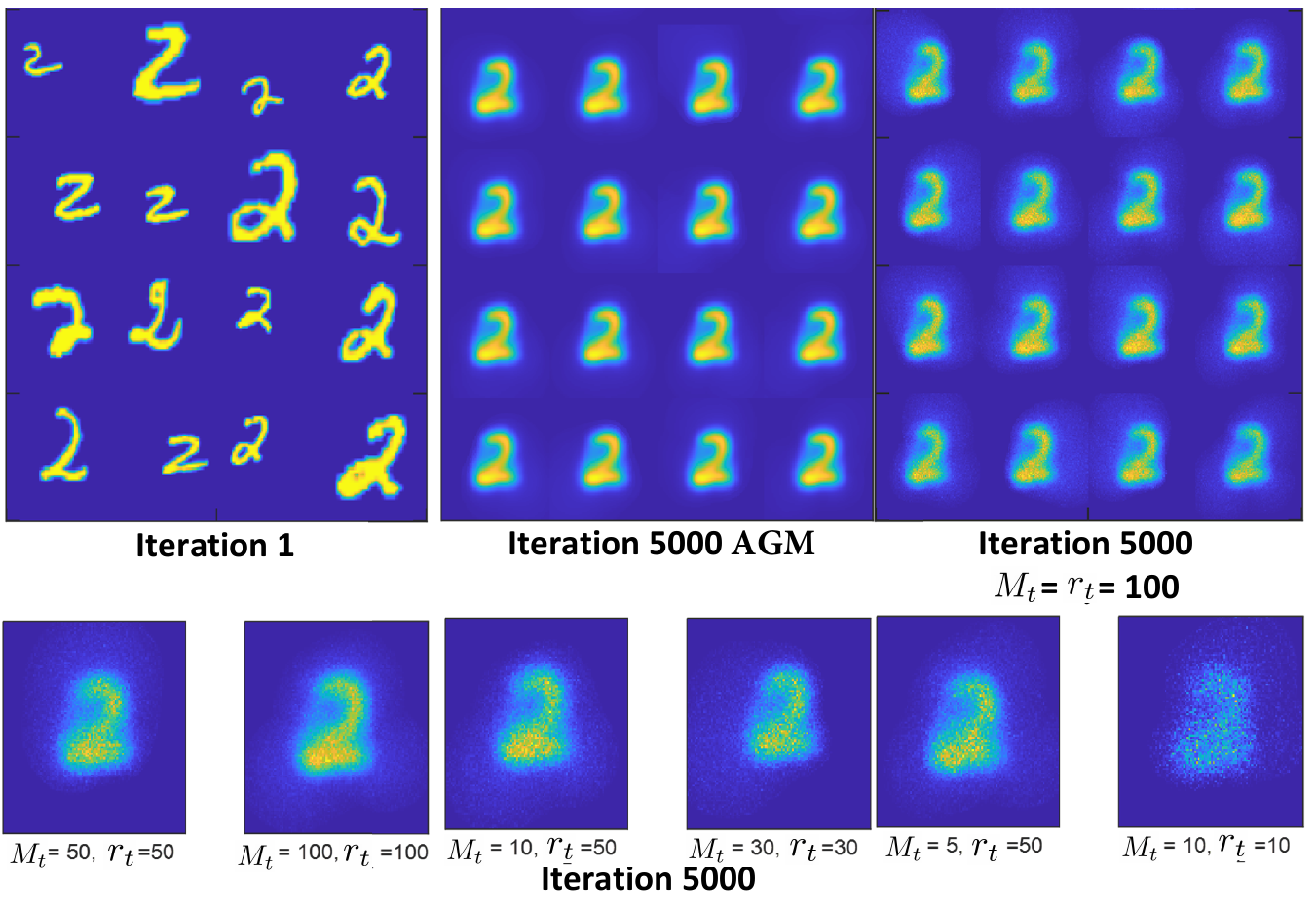}
       \label{fig:bad}
    }
    \hspace{0.01\linewidth} 
    \subfigure[Gaussians. Different number of iterations, different $M_t$ and $r_t$]{%
     \includegraphics[trim=35 20 43 20,clip,width=0.4\linewidth]{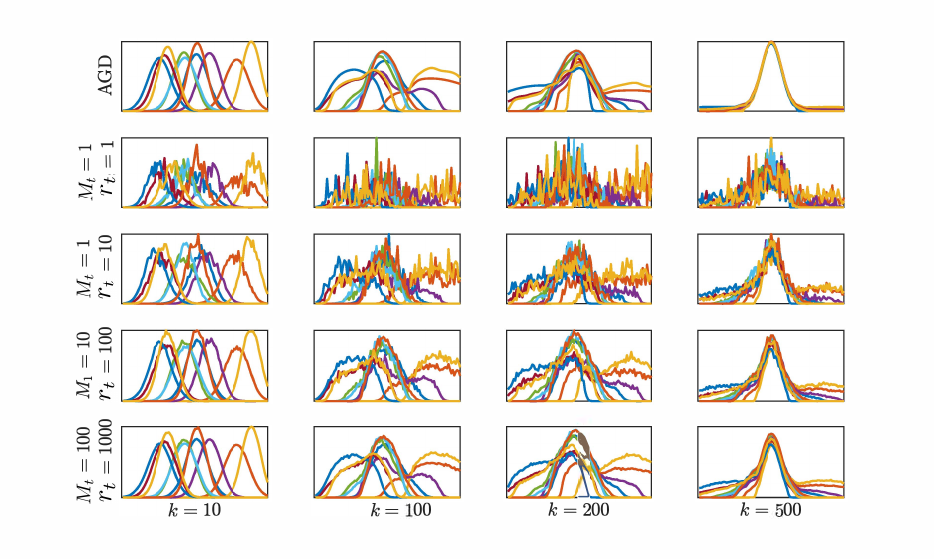}
        
        \label{fig:gaussians2vis}
    }
    \caption{Visualised approximate barycenters obtained by Algorithm~\ref{alg:devdec} and comparison with AGM}
    \label{fig:second}
\end{figure}

\subsection{Log-sum-exp}
Let $n, m \in \mathbb{N}$, $A \in \mathbb{R}^{n \times m}$, and $\boldsymbol{b} \in \mathbb{R}^n$. Let us pose the problem of optimizing the following function
\begin{equation*}
    f(\boldsymbol{x}) = \ln \left(\sum_{i=1}^n b_i \mathrm{e}^{A_i^\top \boldsymbol{x}}\right)
\end{equation*}
over $\boldsymbol{x} \in \mathbb{R}^n$. Problems of this kind arise in entropy-linear programming as a dual problem \cite{gasnikov2016efficient}, and, since the log-sum-exp function serves as a smooth approximation of $\max$ and $\|\boldsymbol{x}\|_\infty$, in some formulations of the PageRank problem or for solving systems of linear equations. $\nabla f(\boldsymbol{x})$ is $L$-Lipschitz continuous with $L = \max_{i=1,...,m} \sum_{j=1}^n A_{ji}^2$ \cite{gasnikov2016efficient}.

The gradient of $f$ is such that
\begin{equation*}
    [\nabla f(\boldsymbol{x})]_i = \frac{\sum_{j=1}^n A_{ji} b_j  \mathrm{e}^{A_j^\top \boldsymbol{x}}}{\sum_{j=1}^n b_j \mathrm{e}^{A_j^\top \boldsymbol{x}}},
\end{equation*}
for every $i = 1, ..., n$. One can see that
\begin{equation*}
    \|\nabla f(\boldsymbol{x})\|_1 = \sum_{i=1}^n [\nabla f(\boldsymbol{x})]_i = \frac{\sum_{j=1}^n \left(\sum_{i=1}^n A_{ji}\right) b_j  \mathrm{e}^{A_j^\top \boldsymbol{x}}}{\sum_{j=1}^n b_j \mathrm{e}^{A_j^\top \boldsymbol{x}}},
\end{equation*}
which implies that if $A$ is a row-stochastic matrix, i.e. $\sum_{i=1}^n A_{ji} = 1$ for all $j = 1, ..., n$, $\|\nabla f(\boldsymbol{x})\|_1 = 1$ for all $\boldsymbol{x} \in \mathbb{R}^n$. This allows one to use the simplified version of the PPS oracle \eqref{eq:pps}, which is
\begin{equation*}
    PPS_s: (x, \boldsymbol{\xi}, \boldsymbol{k}) \mapsto \frac{1}{M} \sum_{i=1}^M \boldsymbol{e}_{k_i},
\end{equation*}
for which the estimate of the variance from the Lemma~\ref{thm:var} turns into
\begin{equation*}
    \sigma_{r,M}^2 = 50 \left(\frac{n - 1}{\mathrm{e} n M} + \frac{\sigma^2}{r}\right),
\end{equation*}
and the number of bits needed to encode it reduces to $M \log_2 n$.

\section{Discussion}
In this paper, we have introduced the PPS quantization method whose result can be encoded with the nearly-optimal number of bits. The second moment of PPS is proportional to the radius of the domain set in $1$-norm, which makes this quantization especially convenient for optimizing functions with gradients taking values in $\sigma(n)$ or more general a bounded set. Moreover, we have proposed accelerated primal-dual stochastic gradient algorithms with mini-batching (with variable batch size) and quantization for problems with affine constraints. We propose to choose quantization parameters such that the inexactness introduced by it is comparable to that of the oracle after batching so that better gradient approximation requires more bits communicated at each iteration. We provide large deviation estimates of suboptimality in function value and discrepancy in constraints, which match the optimal ones up to $\widetilde{O}(1)$ factor. Based on the algorithm for affine-constrained optimisation, we have designed the distributed algorithm for decentralised optimisation. Numerical experiments with computing WB of Gaussians and images confirm the effectiveness and competitiveness of the approach. In Corollary~\ref{cor:main}, we have estimated the communication efficiency of our algorithm, relating it to the properties of the network graph, like the number of nodes, diameter, and maximum degree. Distinct terms therein differ in their dependence on problem parameters: one reflects an iteration complexity (it is neglectable if $\sigma_i$ is large, that is, the less exact is an oracle, the less information we ask from it; it is small if the network is dense), other two describe the approximation complexity (one is large when the diameter is large, another depends only on the number of nodes, but is controllable by regularization, which makes the latter playing a role in control of distributed computing).

\section*{Acknowledges}
The work of D. Pasechnyuk was supported by a grant for research centers in the field of artificial intelligence, provided by the Analytical Center for the Government of the Russian Federation in accordance with the subsidy agreement (agreement identifier 000000D730321P5Q0002) and the agreement with the Ivannikov Institute for System Programming of the Russian Academy of Sciences dated November 2, 2021 No. 70-2021-00142. The work of C.A. Uribe is partially funded by the National Science Foundation Grant \#2211815 and the Google Research Scholar Award. 

\newpage
\bibliographystyle{plainnat}
\bibliography{bibl}

\newpage
\appendix
\section{Technical statements}
\subsection{Common}
\begin{lemma}[Lemma 2 \cite{devolder2011stochastic}]
    Algorithm~\ref{alg:dev} ensures that
    \begin{align*}
        A_T \varphi(\lambda_T) &\leq \min_{\lambda \in \mathbb{R}^m} \left\{\frac{\beta_T}{2} \|\lambda\|^2 +  \sum_{t=0}^T \alpha_t (\Phi(\mu_t, \boldsymbol{\xi}_t) + G_t^\top (\lambda - \mu_t)))\right\} +\sum_{t=0}^T \alpha_t (\varphi(\mu_t) - \Phi(\mu_t, \boldsymbol{\xi}_t)) +\\
        &\quad+\sum_{t=1}^T A_{t-1} (\nabla \varphi(\mu_t) - G_t)^\top (\mu_t - \lambda_{t-1}) + \sum_{t=0}^T \frac{A_t}{\beta_t - L} \|\nabla \varphi(\mu_t) - G_t\|^2.
    \end{align*}
\end{lemma}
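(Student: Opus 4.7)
The plan is to follow Nesterov's estimating-sequence argument in the inexact-oracle form developed by Devolder et al. Introduce the $\beta_t$-strongly convex quadratic \emph{model}
\begin{equation*}
\psi_t(\lambda) := \tfrac{\beta_t}{2}\|\lambda\|^2 + \sum_{i=0}^{t}\alpha_i\bigl(\Phi(\mu_i,\boldsymbol{\xi}_i) + G_i^\top(\lambda-\mu_i)\bigr),
\end{equation*}
whose unique minimizer $z_t = -\beta_t^{-1}\sum_{i=0}^t \alpha_i G_i$ coincides with the $z_t$ of Algorithm~\ref{alg:dev}. The target is to prove by induction on $t$ that
\begin{equation*}
A_t\,\varphi(\lambda_t) \;\leq\; \min_{\lambda}\psi_t(\lambda) + \mathcal{E}_t,
\end{equation*}
where $\mathcal{E}_t$ denotes the partial sum through step $t$ of the three error terms stated in the lemma; at $t=T$ this is exactly the claim.

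For the base case, $\mu_0 = 0$ and $\lambda_0 = z_0 = -(\alpha_0/\beta_0)G_0$, so $L$-smoothness of $\varphi$ at the origin yields $\varphi(\lambda_0) \leq \varphi(0) + \nabla\varphi(0)^\top\lambda_0 + (L/2)\|\lambda_0\|^2$. Writing $\varphi(0) = \Phi(\mu_0,\boldsymbol{\xi}_0) + (\varphi(\mu_0)-\Phi(\mu_0,\boldsymbol{\xi}_0))$ and $\nabla\varphi(0) = G_0 + (\nabla\varphi(0)-G_0)$ produces the first two error contributions in $\mathcal{E}_0$; the residual quadratic is absorbed into $\psi_0(z_0)$ via $\beta_0 > L$ and $\alpha_0 \leq 1$, leaving the $(A_0/(\beta_0-L))\|\nabla\varphi(\mu_0)-G_0\|^2$ piece.

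For the inductive step, chain three ingredients. (i) $L$-smoothness at $\mu_{t+1}$ gives
$A_{t+1}\varphi(\lambda_{t+1}) \leq A_{t+1}\varphi(\mu_{t+1}) + A_{t+1}\nabla\varphi(\mu_{t+1})^\top(\lambda_{t+1}-\mu_{t+1}) + (A_{t+1}L/2)\|\lambda_{t+1}-\mu_{t+1}\|^2$. (ii) Splitting $A_{t+1}=A_t+\alpha_{t+1}$ and using convexity $\varphi(\lambda_t) \geq \varphi(\mu_{t+1}) + \nabla\varphi(\mu_{t+1})^\top(\lambda_t-\mu_{t+1})$ on the $A_t$ piece produces $A_t\varphi(\lambda_t)$, to which the inductive hypothesis applies. (iii) The algorithm's identities $\lambda_{t+1}-\mu_{t+1} = -(\alpha_{t+1}^2/(A_{t+1}\beta_t))G_{t+1}$ and $A_{t+1}\mu_{t+1} = A_t\lambda_t + \alpha_{t+1}z_t$ express both displacements in terms of $z_t$, $z_{t+1}$, and $G_{t+1}$. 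Substituting $\nabla\varphi(\mu_{t+1}) = G_{t+1} + (\nabla\varphi(\mu_{t+1})-G_{t+1})$ and $\varphi(\mu_{t+1}) = \Phi(\mu_{t+1},\boldsymbol{\xi}_{t+1}) + (\varphi(\mu_{t+1})-\Phi(\mu_{t+1},\boldsymbol{\xi}_{t+1}))$ peels off the oracle-value error term and the cross term $A_t(\nabla\varphi(\mu_{t+1})-G_{t+1})^\top(\mu_{t+1}-\lambda_t)$, which matches the stated sum after reindexing.

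The remaining gradient-error inner product along $\lambda_{t+1}-\mu_{t+1}$ is bounded by Young's inequality with weight tuned to $\beta_{t+1}-L$, producing the $(A_{t+1}/(\beta_{t+1}-L))\|\nabla\varphi(\mu_{t+1})-G_{t+1}\|^2$ term. The deterministic $G_{t+1}$ pieces are assembled into the increment $\psi_{t+1}-\psi_t$ evaluated at $z_{t+1}$, while $\beta_t$-strong convexity of $\psi_t$ supplies $\psi_t(z_{t+1}) \geq \psi_t(z_t) + (\beta_t/2)\|z_{t+1}-z_t\|^2$. The main obstacle will be the final bookkeeping step: ensuring that the two residual quadratics in $\|\lambda_{t+1}-\mu_{t+1}\|^2$ (one from the $L/2$ smoothness remainder, one from the Young's inequality weight) are entirely absorbed by the $\beta_{t+1}$-margin separating $\psi_{t+1}(z_{t+1})$ from $\psi_{t+1}(\mu_{t+1})$; this is precisely where the stepsize hypothesis $\alpha_{t+1}^2\beta_{t+1}\leq\beta_t A_{t+1}$ listed in Section~\ref{sec:pd}, together with $\beta_{t+1}\geq\beta_t \geq L$, enters the argument and closes the induction.
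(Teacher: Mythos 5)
Your proposal is correct and follows essentially the same route as the source the paper relies on: the paper gives no proof of this lemma, citing it directly as Lemma 2 of \cite{devolder2011stochastic}, and your estimating-sequence induction with the model $\psi_t$, the convexity/smoothness splitting at $\mu_{t+1}$, the peeling-off of the oracle-value and cross-gradient error terms, and the Young-inequality absorption into the $(\beta_t - L)^{-1}$ term is precisely that argument. The only minor imprecision is that the algorithm's auxiliary point $\hat{\mu}_{t+1} = z_t - \frac{\alpha_{t+1}}{\beta_t}G_{t+1}$ minimises the updated model with the \emph{old} weight $\beta_t$ rather than coinciding with $z_{t+1}$, so the cleanest closing step lower-bounds $\min_\lambda \psi_{t+1}(\lambda)$ by $\min_\lambda\{\psi_t(z_t) + \tfrac{\beta_t}{2}\|\lambda - z_t\|^2 + \alpha_{t+1}(\Phi(\mu_{t+1},\boldsymbol{\xi}_{t+1}) + G_{t+1}^\top(\lambda - \mu_{t+1}))\}$ using $\beta_{t+1}\ge\beta_t$, whose minimiser is exactly $\hat{\mu}_{t+1}$; this is bookkeeping, not a gap.
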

\begin{lemma}[Theorem 3.1.4 \cite{dvinskikh2021decentralized} (1st) and Theorem 10 \cite{devolder2011stochastic} (2nd, 3rd, and 4th)] For $\delta \in (0, 1)$, it holds that
    \begin{gather*}
        P\left(2 R_* \left\|\sum_{t=0}^T \alpha_t (\nabla \varphi(\mu_t) - G_t)\right\| \geq 2\sqrt{2} R_* \left(1 + \sqrt{3 \ln\frac{5}{\delta}}\right) \sqrt{\sum_{t=0}^T \alpha_t^2 \sigma_{r_t,M_t}^2}\right) \leq \frac{\delta}{5},\\
        P\left(\sum_{t=0}^T \alpha_t (\nabla \varphi(\mu_t) - G_t)^\top \mu_t \geq \sqrt{3 \ln \frac{5}{\delta}} (R_* + R) \sqrt{\sum_{t=0}^T \alpha_t^2 \sigma_{r_t,M_t}^2}\right) \leq \frac{\delta}{5},\\
        P\left(\sum_{t=1}^T A_{t-1} (\nabla \varphi(\mu_t) - G_t)^\top (\mu_t - \lambda_{t-1}) \geq \sqrt{3 \ln \frac{5}{\delta}} R_* \sqrt{\sum_{t=0}^T \alpha_t^2 \sigma_{r_t,M_t}^2}\right) \leq \frac{\delta}{5},\\
        P\left(\sum_{t=0}^T \frac{A_t}{\beta_t - L} \|\nabla \varphi(\mu_t) - G_t\|^2 \geq \left(1 + \ln \frac{5}{\delta}\right) \sum_{t=0}^T \frac{A_t \sigma_{r_t,M_t}^2}{\beta_t - L}\right) \leq \frac{\delta}{5},
    \end{gather*}
    if $\|\mu_t - \lambda_*\| \leq R_*, \forall t = 0, ..., T$.
\end{lemma}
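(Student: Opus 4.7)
The plan is to reduce all four tails to concentration inequalities for sub-Gaussian martingale increments, since the lemma is labelled as a direct specialization of Theorem 3.1.4 of \cite{dvinskikh2021decentralized} and Theorem 10 of \cite{devolder2011stochastic}. Fix the natural filtration $\mathcal{F}_t = \sigma(\boldsymbol{\xi}_0, \boldsymbol{k}_0, \boldsymbol{l}_0, \dots, \boldsymbol{\xi}_t, \boldsymbol{k}_t, \boldsymbol{l}_t)$; under it, the iterates $\mu_t$, $\lambda_{t-1}$, $z_{t-1}$, $A_t$ are $\mathcal{F}_{t-1}$-measurable while the noise $Z_t := \nabla \varphi(\mu_t) - G_t$ depends only on the fresh sample, is conditionally mean-zero by construction, and by Lemma~\ref{thm:var} applied conditionally satisfies $\mathbb{E}[\exp(\|Z_t\|^2 / \sigma_{r_t, M_t}^2) \mid \mathcal{F}_{t-1}] \leq \mathrm{e}$. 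The path-level assumption $\|\mu_t - \lambda_*\| \leq R_*$ is treated as deterministic conditioning absorbed into the $\mathcal{F}_{t-1}$-measurable coefficients multiplying each $Z_t$.

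For the second and third tails, the sums are scalar martingales $\sum_t c_t \langle Z_t, v_t \rangle$ whose increments are, conditionally on $\mathcal{F}_{t-1}$, sub-Gaussian with proxy $c_t \sigma_{r_t, M_t} \|v_t\|$. A standard Azuma--Hoeffding bound for sub-Gaussian martingale differences then produces the form $\sqrt{3 \ln(5/\delta)} \sqrt{\sum_t c_t^2 \sigma_{r_t, M_t}^2 \|v_t\|^2}$ with tail $\delta/5$. For the second, I take $c_t = \alpha_t$, $v_t = \mu_t$, and use $\|\mu_t\| \leq \|\mu_t - \lambda_*\| + \|\lambda_*\| \leq R_* + R$; for the third, $c_t = A_{t-1}$ and $v_t = \mu_t - \lambda_{t-1}$, where I rewrite the increment via $\mu_t = \tau_{t-1} z_{t-1} + (1 - \tau_{t-1}) \lambda_{t-1}$ as $\tau_{t-1}(z_{t-1} - \lambda_{t-1})$ and use the algorithm-level identity $A_{t-1} \tau_{t-1} \|z_{t-1} - \lambda_{t-1}\| \leq A_t \cdot R_*$-type bound descending from the $R_*$-conditioning on the iterates to obtain the effective scaling by $R_*$ alone. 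For the fourth, conditional on $\mathcal{F}_{t-1}$ the variable $\|Z_t\|^2/\sigma_{r_t, M_t}^2$ is sub-exponential with $\mathbb{E}[\exp(\|Z_t\|^2/\sigma_{r_t, M_t}^2 - 1) \mid \mathcal{F}_{t-1}] \leq 1$; chaining a Chernoff-style supermartingale argument over $t$ with weights $w_t = A_t/(\beta_t - L)$ and normalizing by $\sum_t w_t \sigma_{r_t, M_t}^2$ yields the $(1 + \ln(5/\delta))$-factor tail.

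The main obstacle is the first inequality, which bounds the Euclidean norm of a vector-valued martingale rather than a scalar. I would split $\|\sum_t \alpha_t Z_t\|$ into mean and fluctuation. The mean is controlled by orthogonality of conditional martingale increments, Jensen's inequality, and the second-moment bound $\mathbb{E}\|Z_t\|^2 \leq \sigma_{r_t, M_t}^2$ (which follows from the MGF bound via $\mathbb{E}[Y] \leq \ln \mathbb{E}[\mathrm{e}^Y]$): this gives $\mathbb{E}\|\sum_t \alpha_t Z_t\| \leq \sqrt{\sum_t \alpha_t^2 \sigma_{r_t, M_t}^2}$, accounting for the additive $1$ inside the parentheses. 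The fluctuation requires a Hilbert-space Pinelis-type concentration for sub-Gaussian martingales, which delivers the $\sqrt{2}\sqrt{3 \ln (5/\delta)}$ factor multiplying the same standard-deviation expression. Combining the two and multiplying by the deterministic factor $2R_*$ gives the first line. Finally, no union bound between the four statements is needed, since each is asserted separately at probability $\delta/5$; the only substantive task beyond invoking the cited theorems is verifying that our PPS oracle supplies their sub-Gaussian MGF hypothesis, which is exactly the content of Lemma~\ref{thm:var}.
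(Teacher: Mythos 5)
The paper offers no proof of this lemma at all: it is imported verbatim by citation, the first tail from Theorem~3.1.4 of \cite{dvinskikh2021decentralized} and the remaining three from Theorem~10 of \cite{devolder2011stochastic}, with the only paper-specific content being that Lemma~\ref{thm:var} certifies the sub-Gaussian moment-generating-function hypothesis those theorems require --- a point you correctly identify as ``the only substantive task.'' Your reconstruction of what lies behind the citations is the right machinery and matches the cited proofs in structure: conditionally mean-zero increments $Z_t=\nabla\varphi(\mu_t)-G_t$ with $\mathcal{F}_{t-1}$-measurable coefficients, Azuma--Hoeffding for the two scalar martingales, a Pinelis-type Hilbert-space bound split into mean plus fluctuation for the vector norm, and the Jensen--Markov argument giving the $(1+\ln(5/\delta))$ factor for the weighted sum of squared norms. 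What your route buys over the paper's is transparency about where each constant ($\sqrt{2}$, $\sqrt{3\ln(5/\delta)}$, $R_*+R$) originates; what the paper's route buys is not having to re-derive delicate constants.

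Two points in your sketch deserve tightening before it could stand as a proof. First, in the third inequality the reduction $\mu_t-\lambda_{t-1}=\tau_{t-1}(z_{t-1}-\lambda_{t-1})$ together with $A_{t-1}\tau_{t-1}\le\alpha_t$ leaves you needing $\|z_{t-1}-\lambda_{t-1}\|\le R_*$, which does not follow from the stated hypothesis $\|\mu_t-\lambda_*\|\le R_*$ alone; the cited argument obtains it from a bound on the whole trajectory (including $z_t$ and $\lambda_t$), which is precisely why the paper separately remarks that $R_*$ absorbs a $J(T)=O(poly(\log T))$ inflation via Lemma~G.3 of \cite{gorbunov2019optimal}. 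Your phrase ``$R_*$-type bound descending from the conditioning'' papers over exactly the step where the constant is earned. Second, treating $\|\mu_t-\lambda_*\|\le R_*$ as ``deterministic conditioning'' is not innocuous: conditioning on a trajectory-dependent event breaks the martingale property, and the rigorous versions of these tails use stopped or truncated martingales rather than conditioning. Neither issue invalidates your plan --- both are resolved in the cited sources --- but they are the two places where your blind reconstruction is materially less than a proof.
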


\subsection{For Theorem~\ref{thm:pdconv}}
\begin{lemma}[Theorem 3.1.4 \cite{dvinskikh2021decentralized}]
    If $\|\mu_t - \lambda_*\| \leq R_*, \forall t = 0, ..., T$, $\Phi$ is defined as in Section~\ref{sec:aff}, $G_t$ is defined by Algorithm~\ref{alg:dev}, then it holds that 
    \begin{align*}
        \min_{\lambda \in \mathbb{R}^m} \Bigg\{\frac{\beta_T}{2} \|\lambda\|^2 + \sum_{t=0}^T \alpha_t (\Phi(\mu_t, \boldsymbol{\xi}_t) &+ G_t^\top (\lambda - \mu_t)))\Bigg\} + \sum_{t=0}^T \alpha_t (\varphi(\mu_t) - \Phi(\mu_t, \boldsymbol{\xi}_t)) =\\
        = \min_{\lambda \in \mathbb{R}^m} \Bigg\{\frac{\beta_T}{2} \|\lambda\|^2 + \sum_{t=0}^T \alpha_t (\varphi(\mu_t) &+ G_t^\top (\lambda - \mu_t)))\Bigg\} \leq \frac{\beta_T R^2}{2} - A_T f(\hat{x}_T) - 2 A_T R_* \|A \hat{x}_T - b\| +\\
        &+ 2 R_* \left\|\sum_{t=0}^T \alpha_t (\nabla \varphi(\mu_t) - G_t)\right\| + \sum_{t=0}^T \alpha_t (\nabla \varphi(\mu_t) - G_t)^\top \mu_t,
    \end{align*}
    where $\hat{x}_T = \frac{1}{A_T} \sum_{t=0}^T x(-A^\top \mu_t)$.
\end{lemma}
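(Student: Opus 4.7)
The plan is to prove the opening equality by a trivial rearrangement and then the inequality in three moves: (i) recast the dual minimand via the Fenchel--Young identity in primal variables; (ii) apply convexity of $f$ together with Jensen's inequality for the averaged iterate; (iii) upper-bound the $\min_\lambda$ by evaluation at a single well-chosen $\lambda^{\#}$. The opening equality is immediate because $\Phi(\mu_t,\boldsymbol{\xi}_t)$ is $\lambda$-independent: it can be pulled out of $\min_\lambda$, after which the additive $\sum_t \alpha_t(\varphi(\mu_t)-\Phi(\mu_t,\boldsymbol{\xi}_t))$ exactly converts $\sum_t \alpha_t \Phi$ into $\sum_t \alpha_t \varphi$ inside the braces.

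For the main inequality, introduce the stochastic error $\tilde{\eta}_t := \nabla\varphi(\mu_t) - G_t$ and let $x_t := x(-A^\top \mu_t) = \nabla f^*(-A^\top\mu_t)$ be the primal recovery. Since $x_t$ attains the supremum in $f^*(-A^\top\mu_t) = \max_x\{(-A^\top\mu_t)^\top x - f(x)\}$, Fenchel--Young holds with equality, giving $f^*(-A^\top \mu_t) = -\mu_t^\top A x_t - f(x_t)$ and $\nabla\varphi(\mu_t) = b - Ax_t$. A one-line calculation then produces the key identity
\begin{equation*}
\varphi(\mu_t) + \nabla\varphi(\mu_t)^\top(\lambda - \mu_t) = -f(x_t) - \lambda^\top(Ax_t - b).
\end{equation*}
Weighting by $\alpha_t$, summing over $t$, and invoking Jensen's inequality for $\hat{x}_T = \frac{1}{A_T}\sum_t \alpha_t x_t$ (to get $\sum_t \alpha_t f(x_t) \geq A_T f(\hat{x}_T)$) yields
\begin{equation*}
\sum_{t=0}^T \alpha_t \bigl[\varphi(\mu_t) + G_t^\top(\lambda - \mu_t)\bigr] \leq -A_T f(\hat{x}_T) - A_T \lambda^\top(A\hat{x}_T - b) - \lambda^\top \tilde{\Delta} + \sum_{t=0}^T \alpha_t \tilde{\eta}_t^\top \mu_t,
\end{equation*}
where $\tilde{\Delta} := \sum_t \alpha_t\,\tilde{\eta}_t = \sum_t \alpha_t(\nabla\varphi(\mu_t) - G_t)$.

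It remains to bound $\min_{\lambda\in\mathbb{R}^m}\{\frac{\beta_T}{2}\|\lambda\|^2 + (\text{the above})\}$ from above. Using that $\min$ is dominated by evaluation at any feasible point, substitute a $\lambda^{\#}$ aligned with $(A\hat{x}_T - b)/\|A\hat{x}_T - b\|$ whose magnitude is chosen so that, under the twin hypotheses $\|\lambda_*\| \leq R$ and $\|\mu_t - \lambda_*\| \leq R_*$, the linear pairing $-A_T(\lambda^{\#})^\top(A\hat{x}_T - b)$ becomes $-2A_T R_*\|A\hat{x}_T - b\|$ and Cauchy--Schwarz yields $|(\lambda^{\#})^\top \tilde{\Delta}| \leq 2R_* \|\tilde{\Delta}\|$, while simultaneously $\frac{\beta_T}{2}\|\lambda^{\#}\|^2 \leq \frac{\beta_T R^2}{2}$. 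Adding the residual $\sum_t \alpha_t\,\tilde{\eta}_t^\top \mu_t$ delivers the claimed right-hand side. The main obstacle is precisely this simultaneous calibration of $\lambda^{\#}$: the direction must align with $A\hat{x}_T - b$ to produce the stated linear coefficient, yet its norm must stay inside the $R$-ball to yield the quadratic term, which is handled by a triangle-inequality shift by $\lambda_*$ that reconciles the two radii, exactly as in the parallel primal Theorem~3.1.4 of \cite{dvinskikh2021decentralized} on which the statement is modelled.
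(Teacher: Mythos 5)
The paper itself offers no proof of this lemma --- it is imported verbatim as Theorem~3.1.4 of \cite{dvinskikh2021decentralized} --- so your attempt can only be measured against the standard argument in that source, which is indeed the skeleton you follow. Your opening equality, the Fenchel--Young identity $\varphi(\mu_t)+\nabla\varphi(\mu_t)^\top(\lambda-\mu_t)=-f(x_t)-\lambda^\top(Ax_t-b)$ (using $\nabla\varphi(\mu_t)=b-Ax_t$), and the Jensen step are all correct; your weighted reading $\hat{x}_T=\frac{1}{A_T}\sum_t\alpha_t x_t$ is also the one consistent with Algorithm~\ref{alg:dev}, the unweighted sum in the statement being a typo.

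The gap is the final calibration of $\lambda^{\#}$, which you flag as ``the main obstacle'' and then dismiss with an unexecuted ``triangle-inequality shift by $\lambda_*$.'' The two requirements you place on $\lambda^{\#}$ are mutually exclusive: by Cauchy--Schwarz, $-(\lambda^{\#})^\top(A\hat{x}_T-b)\ge -\|\lambda^{\#}\|\,\|A\hat{x}_T-b\|$, so producing the coefficient $-2A_TR_*\|A\hat{x}_T-b\|$ forces $\|\lambda^{\#}\|\ge 2R_*$, while keeping $\tfrac{\beta_T}{2}\|\lambda^{\#}\|^2\le\tfrac{\beta_T R^2}{2}$ forces $\|\lambda^{\#}\|\le R$; these are incompatible whenever $2R_*>R$, which is the relevant regime since $R_*\approx J(T)\,R$ with $J(T)\ge 1$. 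A shift by $\lambda_*$ does not decouple them, because the quadratic $\tfrac{\beta_T}{2}\|\lambda\|^2$ is centred at the origin, not at $\lambda_*$. What evaluation at $\lambda^{\#}=2R_*(A\hat{x}_T-b)/\|A\hat{x}_T-b\|$ honestly yields (equivalently, computing the exact minimiser $\lambda=\beta_T^{-1}(A_T(A\hat{x}_T-b)+\tilde{\Delta})$ and applying Young's inequality) is the bound with $2\beta_T R_*^2$ in place of $\tfrac{\beta_T R^2}{2}$, which is the form in which this lemma is actually provable along your route. So the argument is sound up to constants but does not establish the quadratic term as stated, and you need to either carry that larger constant or exhibit the missing mechanism that recovers $\tfrac{\beta_T R^2}{2}$.
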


It can be shown that $R_*$ is proportional to $R$ up to factor $J(T) = O(poly(\log T))$, see Lemma G.3 and Theorem G.4 \cite{gorbunov2019optimal}. 

\begin{cor}
    For $\delta \in (0, 1)$, given the pair of primal \eqref{eq:prim} and dual \eqref{eq:dual} problems, Algorithm~\ref{alg:dev} ensures that
    \begin{equation*}
        P\left(\varphi(\lambda_T) + f(\hat{x}_T) + 2 R_* \|A\hat{x}_T - b\| \geq \frac{\beta_T R^2}{2 A_T} + \frac{C_1 R}{A_T} \sqrt{\sum_{t=0}^T \alpha_t^2 \sigma_{r_t,M_t}^2} + \frac{C_2}{A_T} \sum_{t=0}^T \frac{A_t \sigma_{r_t,M_t}^2}{\beta_t - L}\right) \leq \frac{4\delta}{5},
    \end{equation*}
    where $C_2 = 1 + \ln \frac{5}{\delta} = O(\log 1/\delta)$ and
    \begin{gather*}
        C_1 = \left(2 J(T) + \sqrt{2} - 1\right) \left(\sqrt{2} + (\sqrt{2} + 1) \sqrt{3 \ln\frac{5}{\delta}}\right) + \sqrt{2} - 2 = O(poly(\log T) \sqrt{\log 1/\delta}).
    \end{gather*}
\end{cor}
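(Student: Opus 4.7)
The plan is to chain the two preceding appendix lemmas into a single deterministic ``skeleton'' inequality, then convert it into a high-probability bound via a union bound over the four concentration estimates in the second appendix lemma, and finally absorb $R_*$ into $J(T) R$ to rewrite everything in terms of $R$.

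First I would dock Lemma~2 of \cite{devolder2011stochastic} against the first appendix lemma. The key observation is that the bias-style quantity $\sum_t \alpha_t(\varphi(\mu_t)-\Phi(\mu_t,\boldsymbol{\xi}_t))$ appearing on the right-hand side of the Devolder bound is exactly the summand that the first appendix lemma combines, via its equality, with the $\Phi$-min to produce the $\varphi$-min; the latter is then upper-bounded by $\frac{\beta_T R^2}{2} - A_T f(\hat x_T) - 2 A_T R_* \|A\hat x_T - b\|$ plus two further residuals. Substituting and cancelling the $\Phi$ terms leaves
$A_T\varphi(\lambda_T) \leq \frac{\beta_T R^2}{2} - A_T f(\hat x_T) - 2 A_T R_* \|A\hat x_T - b\|$ plus four noise residuals: $2 R_*\|\sum_t \alpha_t(\nabla\varphi(\mu_t)-G_t)\|$, $\sum_t \alpha_t (\nabla\varphi(\mu_t)-G_t)^\top\mu_t$, $\sum_{t\geq 1} A_{t-1}(\nabla\varphi(\mu_t)-G_t)^\top(\mu_t-\lambda_{t-1})$, and $\sum_t \frac{A_t}{\beta_t-L}\|\nabla\varphi(\mu_t)-G_t\|^2$. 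Moving the two negative terms on the right to the left yields the deterministic template underlying the corollary.

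Next I would apply the four sub-Gaussian tail estimates of the second appendix lemma simultaneously through a union bound: each holds with probability at least $1 - \delta/5$, so their conjunction holds with probability at least $1 - 4\delta/5$, matching the failure rate stated in the corollary. On that event the first three residuals are linear in $\sqrt{\sum_t \alpha_t^2 \sigma_{r_t,M_t}^2}$ with coefficients expressed in $R$, $R_*$, and $\sqrt{3\ln(5/\delta)}$, while the fourth is linear in $\sum_t A_t\sigma_{r_t,M_t}^2/(\beta_t - L)$ with coefficient $1 + \ln(5/\delta) = C_2$. Dividing through by $A_T$ produces exactly the shape of the stated inequality.

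The final step is to replace $R_*$ by $J(T) R$ using the growth bound $R_* \leq J(T) R$ with $J(T) = O(poly(\log T))$ from Lemma~G.3 and Theorem~G.4 of \cite{gorbunov2019optimal}, flagged in the remark between the two appendix lemmas. Collecting the three $\sqrt{\sum_t \alpha_t^2 \sigma_{r_t,M_t}^2}$-coefficients and factoring $R$ out reproduces the stated $C_1 = (2 J(T) + \sqrt 2 - 1)(\sqrt 2 + (\sqrt 2 + 1)\sqrt{3\ln(5/\delta)}) + \sqrt 2 - 2$ after a mechanical expansion. The main obstacle is not this algebra but ensuring the hypothesis $\|\mu_t - \lambda_*\| \leq R_*$ required by the second appendix lemma actually holds on the good event: a rigorous treatment needs an induction propagating the bound on $\|\mu_t-\lambda_*\|$ forward along the good event, and this is precisely where the $J(T)$ factor --- and hence the $poly(\log T)$ blowup in $C_1$ --- enters the estimate.
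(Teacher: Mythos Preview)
Your proposal is correct and matches the paper's intended argument: the corollary is stated without proof precisely because it is the direct combination of Lemma~2 of \cite{devolder2011stochastic} with the min-bounding lemma (Theorem~3.1.4 of \cite{dvinskikh2021decentralized}), followed by a union bound over the four concentration estimates and the substitution $R_* \leq J(T)R$. Your identification of the subtlety around the hypothesis $\|\mu_t - \lambda_*\| \leq R_*$ and the role of $J(T)$ is also on point and consistent with how the paper defers that issue to \cite{gorbunov2019optimal}.
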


It remains to deduce from the latter statement the estimates of the form \eqref{eq:target}, yet with implicit $\varepsilon$ which is underdefined until $\boldsymbol{\alpha}$, $\boldsymbol{\beta}$, $\boldsymbol{r}$, and $\boldsymbol{M}$ are specified, which will be done in Section~\ref{sec:distr}. Note that weak duality implies $f(\hat{x}_T) - f(x_*) \leq \varphi(\lambda_T) + f(\hat{x}_T)$. The following statement allows us to move from $\hat{x}_t$ to $x_T$.

\begin{lemma}[Theorem G.4 \cite{gorbunov2019optimal}]
    For $\delta \in (0, 1)$, Algorithm~\ref{alg:dev} ensures with probability at least $1 - \delta/5$ that the following holds:
    \begin{gather*}
        f(\hat{x}_T) \geq f(x_T) - L \|\hat{x}_T - x_T\|,\\
        2 R_* \|A \hat{x}_T - b\| \geq 2 R_* \|A x_T - b\| - 2 R_* \|A\|_2 \|\hat{x}_T - x_T\|,\\
        \|\hat{x}_T - x_T\| \leq \frac{\sqrt{2}}{A_T} \frac{1 + \sqrt{3 \ln \frac{5}{\delta}}}{\|A\|} \sqrt{\sum_{t=0}^T \alpha_t^2 \sigma_{r_t,M_t}^2}.
    \end{gather*}
\end{lemma}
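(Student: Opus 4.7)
The lemma bundles three claims: two essentially algebraic inequalities and one high-probability concentration bound. I would prove them in the order stated, since the first two reduce the primal-side residual of $\hat{x}_T$ to that of $x_T$ plus a gap $\|\hat{x}_T - x_T\|$ that is then controlled by the third inequality.

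For the first inequality $f(\hat{x}_T) \geq f(x_T) - L\|\hat{x}_T - x_T\|$, I would use convexity of $f$ combined with a bound on $\|\nabla f\|$: by convexity, $f(\hat{x}_T) \geq f(x_T) + \nabla f(x_T)^{\top}(\hat{x}_T - x_T) \geq f(x_T) - \|\nabla f(x_T)\|\,\|\hat{x}_T - x_T\|$, and one absorbs $\|\nabla f(x_T)\|$ into the constant $L$ (following the Gorbunov et al.\ convention, justified by boundedness of the primal iterates inherited from $\|\mu_t - \lambda_*\| \leq R_*$ and Lipschitz continuity of $\nabla f^*$). The second inequality is immediate from the reverse triangle inequality applied to $\|\cdot - b\|$: $\|A\hat{x}_T - b\| \geq \|Ax_T - b\| - \|A(\hat{x}_T - x_T)\| \geq \|Ax_T - b\| - \|A\|_2\,\|\hat{x}_T - x_T\|$, after which multiplying by $2R_*$ gives the stated form.

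The substantive step is the third bound. I would expand
\[
\hat{x}_T - x_T \;=\; \frac{1}{A_T}\sum_{t=0}^T \alpha_t\bigl(x(-A^{\top}\mu_t) - x(-A^{\top}\mu_t, \boldsymbol{\xi}_t)\bigr),
\]
observe that each summand is conditionally zero-mean given the past (from $\mathbb{E}_{\boldsymbol{\xi}_t}[x(-A^{\top}\mu_t,\boldsymbol{\xi}_t)] = \nabla f^*(-A^{\top}\mu_t) = x(-A^{\top}\mu_t)$ via unbiasedness of $\nabla F^*$), and identify the $A$-weighted increments with the oracle errors: $A\bigl(x(-A^{\top}\mu_t) - x(-A^{\top}\mu_t,\boldsymbol{\xi}_t)\bigr) = g(\mu_t,\boldsymbol{\xi}_t) - \nabla\varphi(\mu_t)$, which by Lemma~\ref{thm:var} are sub-Gaussian with parameter $\sigma_{r_t,M_t}$. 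A vector-valued Pinelis/Azuma-type inequality for sub-Gaussian martingale differences in a Hilbert space then yields, with probability at least $1-\delta/5$,
\[
\Bigl\|\sum_{t=0}^T \alpha_t A\bigl(x(-A^{\top}\mu_t) - x(-A^{\top}\mu_t,\boldsymbol{\xi}_t)\bigr)\Bigr\| \;\leq\; \sqrt{2}\bigl(1+\sqrt{3\ln(5/\delta)}\bigr)\sqrt{\sum_{t=0}^T \alpha_t^2 \sigma_{r_t,M_t}^2}.
\]
Dividing by $A_T$ and by the minimum singular value of $A$ (which is the intended meaning of $\|A\|$ in the denominator, justified by the assumption that $A$ is positive-definite) delivers the claimed bound on $\|\hat{x}_T - x_T\|$.

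The main obstacle is the concentration step. One must set up the filtration precisely so that $\mu_t$ is measurable with respect to $\boldsymbol{\xi}_{0:t-1},\boldsymbol{k}_{0:t-1},\boldsymbol{l}_{0:t-1}$ and the increment at step $t$ is conditionally sub-Gaussian with parameter $\sigma_{r_t,M_t}$, then apply Pinelis' inequality with the correct constants to match $\sqrt{2}(1+\sqrt{3\ln(5/\delta)})$. A secondary but important point is the passage from the $A$-weighted norm of the martingale sum to the Euclidean norm of $\hat{x}_T - x_T$; this is where the assumed invertibility of $A$ enters and where the notation $\|A\|$ in the denominator of the stated bound should be read as $\sigma_{\min}(A)$ throughout.
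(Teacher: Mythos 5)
The paper does not actually prove this lemma: it is imported verbatim as Theorem G.4 of \cite{gorbunov2019optimal}, so there is no in-paper argument to compare against. Your reconstruction follows the route that the cited source (and the surrounding appendix lemmas) take: the second inequality is indeed just the triangle inequality, and the third is the vector Azuma/Pinelis bound applied to the martingale differences $\alpha_t\,A\bigl(x(-A^\top\mu_t)-x(-A^\top\mu_t,\boldsymbol{\xi}_t)\bigr)=\alpha_t\bigl(\nabla\varphi(\mu_t)-G(\mu_t,\boldsymbol{\xi}_t)\bigr)$, with the same constant $\sqrt{2}\bigl(1+\sqrt{3\ln(5/\delta)}\bigr)$ that appears in the paper's first appendix lemma; your identification of the $A$-weighted increments with the dual-gradient errors is exactly right, and using $\sigma_{r_t,M_t}\ge\sigma_{r_t}$ as the sub-Gaussian parameter is a harmless over-estimate. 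Your observation that $\|A\|$ in the denominator must be read as $\sigma_{\min}(A)$ (restricted to the relevant subspace) for the implication $\|Av\|\le C\Rightarrow\|v\|\le C/\|A\|$ to go through is correct and worth making explicit, since the paper later substitutes $\|A\|_2$ into this slot (the $C_4L/\|A\|_2$ term of Theorem~\ref{thm:pdconv}), which is only valid when the smallest and largest nonzero singular values coincide.

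The one genuine gap is the first inequality. Convexity gives $f(\hat{x}_T)\ge f(x_T)-\|\nabla f(x_T)\|\,\|\hat{x}_T-x_T\|$, but the step ``absorb $\|\nabla f(x_T)\|$ into the constant $L$'' is asserted, not proved, and it is not true in general: $L$ is the Lipschitz constant of $\nabla\varphi$, which under $\gamma$-strong convexity of $f$ equals $\|A\|_2^2/\gamma$ and has no a priori relation to $\|\nabla f(x_T)\|$. What the setup actually gives you is control of $\nabla F(\cdot,\boldsymbol{\xi}_t)$ at the points $x(-A^\top\mu_t,\boldsymbol{\xi}_t)$ (where it equals $-A^\top\mu_t$, hence is bounded by $\|A\|_2(R+R_*)$ on the event $\|\mu_t-\lambda_*\|\le R_*$), and passing from that to a bound on $\|\nabla f\|$ at the \emph{averaged} point $x_T$ requires an extra argument (e.g.\ Lipschitz continuity of $\nabla f$, or convexity of the set on which the gradient bound holds). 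You should either prove the bound $\|\nabla f(x_T)\|\le\|A\|_2(R+R_*)$ on the high-probability event and state the first inequality with that constant, or explicitly invoke the assumption under which \cite{gorbunov2019optimal} writes it with $L$; as written, this step would not survive scrutiny.
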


\subsection{For the choice of coefficients}
\begin{lemma}
    If $\alpha_t = \frac{t + 1}{a}$, $A_T = \sum_{t=0}^T \alpha_t$, then
    \begin{gather*}
        \frac{1}{A_T} \sqrt{\sum_{t=0}^T \alpha_t^2} = \sqrt{\frac{2(2 T + 3)}{3(T + 1)(T + 2)}} \leq \frac{2}{\sqrt{3}}\frac{1}{\sqrt{T}} ,\quad\frac{1}{A_T} \sum_{t=0}^T \frac{A_t}{(t+2)^{3/2}} \leq \frac{2}{3} \frac{1}{\sqrt{T}}.
    \end{gather*}
\end{lemma}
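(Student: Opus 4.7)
The plan is to compute $A_T$ in closed form using elementary power sums, then for each of the two estimates, simplify the left-hand side to a rational function of $T$, and finally verify the claimed upper bound by squaring and clearing denominators.

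First, since $\alpha_t = (t+1)/a$, the identity $\sum_{t=0}^T (t+1) = (T+1)(T+2)/2$ gives $A_T = (T+1)(T+2)/(2a)$. Combining this with $\sum_{k=1}^{T+1} k^2 = (T+1)(T+2)(2T+3)/6$, I get $\sum_{t=0}^T \alpha_t^2 = (T+1)(T+2)(2T+3)/(6a^2)$. Dividing $\sqrt{\sum \alpha_t^2}$ by $A_T$, the factor $a$ cancels and one power of $(T+1)(T+2)$ is absorbed, yielding exactly the claimed form $\sqrt{2(2T+3)/(3(T+1)(T+2))}$. For the upper bound $\tfrac{2}{\sqrt{3}}\tfrac{1}{\sqrt{T}}$, I square: the inequality becomes $T(2T+3) \leq 2(T+1)(T+2) = 2T^2 + 6T + 4$, which reduces to $0 \leq 3T + 4$, trivially true for $T \geq 0$.

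For the second estimate, note that $A_t = (t+1)(t+2)/(2a)$, hence $A_t/(t+2)^{3/2} = (t+1)/(2a\sqrt{t+2})$. I would then use the sharpening
\[
\frac{(t+1)^2}{t+2} = t + \frac{1}{t+2} \leq t+1, \qquad t \geq 0,
\]
which gives $(t+1)/\sqrt{t+2} \leq \sqrt{t+1}$. Summing and comparing with an integral,
\[
\sum_{t=0}^T \sqrt{t+1} \;=\; \sum_{k=1}^{T+1}\sqrt{k} \;\leq\; \int_{1}^{T+2}\sqrt{x}\diff x \;\leq\; \tfrac{2}{3}(T+2)^{3/2}.
\]
Dividing by $A_T$ pulls out a factor $2a/((T+1)(T+2))$, the $a$'s cancel, and the bound collapses to $2\sqrt{T+2}/(3(T+1))$. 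The final step is to check $\sqrt{T}\cdot\sqrt{T+2} \leq T+1$, which squares to $T(T+2) \leq (T+1)^2$, i.e., $T^2 + 2T \leq T^2 + 2T + 1$.

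The main obstacle is simply being careful with constants: the lazier bound $t+1 \leq t+2$ would give $(t+1)/\sqrt{t+2} \leq \sqrt{t+2}$, producing a factor $(T+3)^{3/2}$ that fails the final $\sqrt{T}(T+3)^{3/2} \leq T+1$ comparison. The sharper substitution $(t+1)^2/(t+2) = t + 1/(t+2)$ is therefore the key algebraic trick that makes the integral estimate close the gap cleanly, and everything else is routine manipulation of standard power sums.
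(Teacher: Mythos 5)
Your proof is correct. The paper states this lemma in the appendix without any proof, so there is no authorial argument to compare against; your derivation via the closed-form power sums for $A_T$ and $\sum_t \alpha_t^2$, the bound $(t+1)/\sqrt{t+2}\leq\sqrt{t+1}$, and the integral comparison $\sum_{k=1}^{T+1}\sqrt{k}\leq \tfrac{2}{3}(T+2)^{3/2}$ is the standard route, and every step (including the final checks $T(2T+3)\leq 2(T+1)(T+2)$ and $T(T+2)\leq(T+1)^2$) verifies.
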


\subsection{For Corollary \ref{cor:main}}
\begin{cor}[Theorem~\ref{th:choice}]
    Given $\varepsilon > 0$, $0 < \delta < 1$, and $\boldsymbol{M}$ determined by \ref{option:1b}, Algorithm~\ref{alg:devdec} can ensure $P(f(\boldsymbol{x}) - f(\boldsymbol{x}_*) \leq \varepsilon) > 1 - \delta$ and $P\left(\frac{\|\sqrt{\boldsymbol{W}} \boldsymbol{x}\|}{\sqrt{\lambda_2(W)}} \leq \frac{\varepsilon \sqrt{m}}{B_*}\right) > 1 - \delta$, if each node $j$ sends,
    \begin{enumerate}
        \item if $r_t = r \geq 1$, \newline
        $\widetilde{O}\left(d_j \frac{B^2 \log n}{\|W\|_2 m} \max\left\{\frac{r}{\sigma_i^2} \sqrt{\frac{B_*^2}{\gamma \varepsilon} \chi(W)}, \frac{B_*^2}{\varepsilon^2} \chi(W), \frac{m^3 \lambda_2(W)}{\gamma^2 \varepsilon^2}\chi(W)\right\}\right)$ ,
        \item if \inlineequation[eq:r]{r_t = \max\left\{1, \frac{34 \gamma \sigma_i^2 \alpha_t}{\varepsilon} \chi(W) \max\left\{18 C_2, \left(C_3 + C_4 \frac{\sqrt{m^3 \lambda_2(W)}}{\gamma B_*}\right)^2\right\}\right\},\qquad\;\;\quad}\newline
        $\widetilde{O}\left(d_j\frac{B^2 \log n}{\|W\|_2 m} \max\left\{\frac{1}{\sigma_i^2} \sqrt{\frac{B_*^2}{\gamma \varepsilon} \chi(W)},\frac{B_*^2}{\varepsilon^2} \chi(W), \frac{m^3 \lambda_2(W)}{\gamma^2 \varepsilon^2}\chi(W)\right\}\right)$ bits in total.
    \end{enumerate}
\end{cor}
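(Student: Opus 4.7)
The corollary is in essence a specialization of Theorem~\ref{th:choice} to the decentralised dual problem \eqref{eq:dual_distr} solved by Algorithm~\ref{alg:devdec}, combined with a per-node bit count for the PPS broadcasts. My plan has three steps.

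First, I would set up the dictionary between the generic primal-dual scheme of Section~\ref{sec:pd} and the decentralised setup of Section~\ref{sec:distr}. The constraint matrix is $A=\sqrt{\boldsymbol{W}}$, $b=0$, so $\|A\|_2^2=\|\boldsymbol{W}\|_2=\|W\|_2$. By Lemma~2 of \cite{dvurechenskii2018decentralize} the dual gradient is $L$-Lipschitz with $L=m\|W\|_2/\gamma$, by Theorem~3.2.3 of \cite{dvinskikh2021decentralized} the dual radius obeys $R^2=B_*^2/(m\lambda_2(W))$, and by Lemma~3.2.1 of \cite{dvinskikh2021decentralized} the aggregated stochastic oracle for $\nabla\varphi$ is sub-Gaussian with parameter $\sigma^2=O(\|W\|_2 m \sigma_i^2)$. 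The change of variables $\lambda=\sqrt{\boldsymbol{W}}\boldsymbol{\lambda}$ (Section~\ref{sec:distr}) turns Algorithm~\ref{alg:dev}'s recursion on $\mu_t$ into the one on $z_t^i,\mu_t^i,\lambda_t^i$ driven by $\boldsymbol{W}$-averaged gradients, which is exactly Algorithm~\ref{alg:devdec}. The stated constraint form is just the rewriting of $\|Ax-b\|\le\varepsilon/R$ with $R=B_*/\sqrt{m\lambda_2(W)}$, giving $\|\sqrt{\boldsymbol{W}}\boldsymbol{x}\|\le\varepsilon\sqrt{m\lambda_2(W)}/B_*$, i.e. $\|\sqrt{\boldsymbol{W}}\boldsymbol{x}\|/\sqrt{\lambda_2(W)}\le\varepsilon\sqrt{m}/B_*$.

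Second, I would invoke Theorem~\ref{th:choice} in either case (constant $r_t=r$ or variable $r_t$) to obtain a number of iterations $T$ and a total oracle-call count $\sum_t r_t$. The substitutions above simplify the three inner terms of the max via the identities $LR^2/\varepsilon=B_*^2\chi(W)/(\gamma\varepsilon)$, $\sigma^2 R^2/\varepsilon^2=\sigma_i^2 B_*^2\chi(W)/\varepsilon^2$, and $\sigma^2 L^2/(\varepsilon^2\|A\|_2^2)=\sigma_i^2 m^3\|W\|_2^2/(\gamma^2\varepsilon^2)$, so the generic oracle-complexity from Theorem~\ref{th:choice} becomes expressible entirely in terms of $\chi(W)$, $\lambda_2(W)$, $\|W\|_2$, $B_*$, $\sigma_i$, $\gamma$, $m$, and $\varepsilon$.

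Third, I would convert the oracle-call total to per-node communicated bits. At every iteration of Algorithm~\ref{alg:devdec}, node $j$ broadcasts its PPS-quantized local gradient $G_t^j$ to each of its $d_j$ neighbours, which costs $d_j(2|float|+2M_t\log_2 n)$ bits by the discussion following \eqref{eq:pps}. Summing over $T$ iterations and ignoring the $|float|$ term (absorbed into $\widetilde{O}$) yields a total of $O(d_j\log n \sum_{t=0}^T M_t)$ bits at node $j$. Under scheme~(1b), $M_t=(2(1-1/n)B^2/(e\sigma^2))r_t$, and since $\sigma^2=O(\|W\|_2 m \sigma_i^2)$, we get $\sum_t M_t=O\bigl(B^2/(\|W\|_2 m\sigma_i^2)\bigr)\sum_t r_t$. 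Plugging in the oracle-call bound from step two and collecting the factor $\frac{d_j B^2\log n}{\|W\|_2 m\sigma_i^2}\cdot\sigma_i^2=\frac{d_j B^2\log n}{\|W\|_2 m}$ in front of the second and third terms of the max (the $\sigma_i^2$ cancelling for these), while keeping an explicit $r/\sigma_i^2$ in front of the first, produces precisely the expressions stated in cases~1 and~2. The variable-$r_t$ case in~2 additionally uses the choice \eqref{eq:r}, which is obtained by simply re-expressing Theorem~\ref{th:choice}'s recipe $r_t=\max\{1,\tfrac{34\sigma^2\alpha_t}{\varepsilon L}\max\{18C_2^2,(C_3+C_4 L/(\|A\|_2 R))^2\}\}$ after substituting the decentralised $L,R,\|A\|_2,\sigma$.

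The routine but error-prone step is the third one: tracking factors of $\|W\|_2$, $\lambda_2(W)$, $m$, $\gamma$ through the substitutions so that the three terms land in the common prefactor $\frac{d_j B^2\log n}{\|W\|_2 m}$. The actual mathematics is linear-algebraic bookkeeping; the only genuinely non-trivial input is that Theorem~\ref{th:choice}'s complexity is the correct one to invoke, which requires that Algorithm~\ref{alg:devdec}'s use of consensus-averaged PPS gradients $\sum_{j\mathsf{E}i}W_{ij}G_t^j$ is identical (after the change of variables $\lambda=\sqrt{\boldsymbol{W}}\boldsymbol{\lambda}$) to a single call to the $\sqrt{\boldsymbol{W}}$-gradient oracle in Algorithm~\ref{alg:dev}, so that the sub-Gaussian parameter of the aggregate matches the $\sigma^2$ used in the theorem. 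This equivalence is already justified in Section~\ref{sec:distr}, and the rest follows.
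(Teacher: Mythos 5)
Your proposal is correct and follows essentially the same route as the paper, which states this corollary as a direct specialization of Theorem~\ref{th:choice}: substitute $A=\sqrt{\boldsymbol{W}}$, $b=0$, $L=m\|W\|_2/\gamma$, $R^2=B_*^2/(m\lambda_2(W))$, and $\sigma^2=O(\|W\|_2 m\sigma_i^2)$, then convert oracle calls to per-node bits via the $d_j(2|float|+2M_t\log_2 n)$ cost of each PPS broadcast. The only caveat is bookkeeping: your substitutions match the paper's displayed expressions up to the $\widetilde{O}$ constants, and the equivalence of Algorithm~\ref{alg:devdec} with Algorithm~\ref{alg:dev} under the change of variables $\lambda=\sqrt{\boldsymbol{W}}\boldsymbol{\lambda}$ is, as you note, already established in Section~\ref{sec:distr}.
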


\end{document}